   \def\MR#1{}
\theoremstyle{plain}
\newtheorem{theorem}{Theorem}[section]
\newtheorem{mainthm}{Theorem}
\newtheorem{lemma}[theorem]{Lemma}
\newtheorem{proposition}[theorem]{Proposition}
\newtheorem{corollary}[theorem]{Corollary}
\theoremstyle{definition}
\newtheorem{definition}[theorem]{Definition}
\newtheorem{notation}[theorem]{Notation}
\newtheorem{remark}[theorem]{Remark}
\newcommand{\C}{\mathbb{C}}
\newcommand{\N}{\mathbb{N}}
\newcommand{\Z}{\mathbb{Z}}
\newcommand{\Lcal}{\mathcal{L}}
\newcommand{\Acal}{\mathcal{A}}
\newcommand{\ot}{\otimes}
\newcommand{\id}{{\rm id}}
\newcommand{\Pol}{{\rm Pol}}
\newcommand{\Irr}{{\rm Irr}}
\newcommand{\Linf}{{\rm L}^\infty}
\newcommand{\Aut}{{\rm Aut}}
\newcommand{\Ad}{{\rm Ad}}
\newcommand{\GG}{\mathbb{G}}
\newcommand{\NC}{{\rm NC}}
\newcommand{\Ucal}{\mathcal{U}}
\begin{document}
%%%%%%%%%%%%%%%%%%%%%%%%

%%%%%%%%%%%%%%%%%%%%%%%%
\title{On free wreath products of classical groups}
\author{Pierre Fima}
\address{Pierre Fima
\newline
Universit\'e Paris Cit\'e, Sorbonne Universit\'e, CNRS, IMJ-PRG, F-75013 Paris, France.}
\email{pierre.fima@imj-prg.fr}

 \author{Yigang Qiu}
\address{Yigang Qiu
\newline
Universit\'e Paris Cit\'e and Sorbonne Universit\'e, CNRS, IMJ-PRG, F-75013 Paris, France.}
\email{yqiu@imj-prg.fr}

\maketitle

\section{Introduction}
\noindent
The theory of compact quantum groups, initiated by Woronowicz
\cite{wor87,wor88,wor98}, provides a framework in which classical symmetries
are replaced by noncommutative ones. One of the basic examples is Wang's
quantum permutation group \(S_N^+\), the universal compact quantum group
acting on an \(N\)-point set. This construction shows that even finite
spaces may have genuinely quantum symmetries once the algebra of functions
on the symmetry group is allowed to be noncommutative.

\medskip

\noindent
Free wreath products were introduced by Bichon in this circle of ideas
\cite{Bi04}. Their original meaning comes from quantum automorphism groups
of disjoint unions of finite graphs. If \(X\) is a finite connected graph
and \(X^{\sqcup N}\) is the disjoint union of \(N\) copies of \(X\), then
classically one has two kinds of symmetries: one may act independently on
each copy of \(X\), and one may permute the \(N\) copies. These two
operations form the classical wreath product $\operatorname{Aut}(X)\wr S_N .$
In the quantum setting, the permutation of the copies is governed by
\(S_N^+\), and the internal symmetries of the copies interact freely with
this quantum permutation. Bichon's free wreath product is precisely the
compact quantum group encoding this combined symmetry. Thus the free wreath
product is not merely a formal operation on compact quantum groups; it was
designed to describe the quantum symmetry of repeated finite structures.

\medskip

\noindent
Since Bichon's original work, free wreath product constructions have become
an important source of examples and structure results in compact quantum
group theory. A number of extensions and variants have been developed
\cite{pit14,FP16,TW18,FS18,Fr22,FT25}, revealing connections with
representation theory, free probability, approximation properties and
operator algebras.

\medskip

\noindent
The present article belongs to this circle of ideas and focuses on the
generalized free wreath products associated with classical groups, recently
introduced by the first author and Arthur Troupel \cite{FT25}. The
construction of \cite{FT25} is not obtained by simply replacing the
graph-theoretic input in Bichon's construction. Rather, it reformulates the
free wreath product mechanism in terms of quantum homomorphisms and quantum
group actions. More precisely, given compact quantum groups \(G,H\) and an
action
$\beta:H\curvearrowright (B,\psi)$
on a finite-dimensional \(C^*\)-algebra preserving a faithful state \(\psi\),
one constructs a compact quantum group $G\wr_{*,\beta}H,$
called the generalized free wreath product. This construction contains the
previous free wreath products as special cases and produces new compact
quantum groups.

\medskip

\noindent
In \cite{FT25}, the main  results on the von Neumann algebra of a
generalized free wreath product are proved under two hypotheses: the action
\(\beta\) is \(2\)-ergodic, and the compact quantum groups involved are
infinite.  These assumptions are crucial in their analysis, since
\(2\)-ergodicity provides the mixing needed to prove factoriality and further
structural properties of the associated von Neumann algebra.

The examples studied in the present paper occupy a special position in this
framework.  They were singled out in \cite{FT25} as a new class of compact
quantum groups produced by the generalized free wreath product construction,
even when one starts from classical group data.  More precisely, let
\(\Gamma\) be a discrete group and let \(\Lambda\) be a finite group.  We
consider the action
\[
\beta:\widehat{\Lambda}\curvearrowright (C^*(\Lambda),\tau_\Lambda)
\]
given by
\[
\beta(\lambda)=\lambda\otimes\lambda,\qquad \lambda\in\Lambda,
\]
and the associated generalized free wreath product
\[
\GG=\widehat{\Gamma}\wr_{*,\beta}\widehat{\Lambda}.
\]
This family is one of the most concrete finite examples arising from the
Fima--Troupel construction.  It is also a natural test case for the part of
the theory which is not covered by the \(2\)-ergodic operator-algebraic
results of \cite{FT25}: here \(H=\widehat{\Lambda}\) is finite, and the
action \(\beta\) is ergodic but, as soon as \(|\Lambda|\neq 2\), not
\(2\)-ergodic.

Consequently, the von Neumann algebraic results of \cite{FT25} which rely on
\(2\)-ergodicity and infiniteness cannot be applied directly to \(\GG\).
At the same time, this is precisely what makes the family interesting.  In
the absence of \(2\)-ergodic mixing, the finite group data are not averaged
out: conjugacy information and central elements remain visible in the Haar
state.  The present paper shows that, for this distinguished finite family,
one can replace the missing mixing mechanism by an explicit Haar state
formula and still obtain strong operator-algebraic consequences.

\iffalse The examples studied in the present paper fall outside this setting.  Here
\(H=\widehat{\Lambda}\) comes from a finite group \(\Lambda\), and the action $\beta:\widehat{\Lambda}\curvearrowright (C^*(\Lambda),\tau_\Lambda)$
is ergodic but, for \(|\Lambda|\neq 2\), not \(2\)-ergodic.  Consequently,
the von Neumann algebraic results of \cite{FT25} which rely on
\(2\)-ergodicity and infiniteness cannot be applied directly.  This is the
reason why a separate analysis of the finite case is needed.

\medskip

\noindent
Let \(\Gamma\) be a discrete group and let \(\Lambda\) be a finite group.
Consider the action
\[
\beta:\widehat{\Lambda}\curvearrowright (C^*(\Lambda),\tau_\Lambda)
\]
given by the comultiplication
\[
\beta(\lambda)=\lambda\otimes\lambda,\qquad \lambda\in\Lambda,
\]
where \(\tau_\Lambda\) is the canonical trace on \(C^*(\Lambda)\). We denote
by
\[
\GG:=\widehat{\Gamma}\wr_{*,\beta}\widehat{\Lambda}
\]
the corresponding generalized free wreath product. The action \(\beta\) is
ergodic, but it is not \(2\)-ergodic as soon as \(|\Lambda|\neq 2\).
Consequently, the operator-algebraic results from \cite{FT25} which rely on
infinite \(2\)-ergodicity do not apply to \(\GG\). The finite group data are
not averaged out; conjugacy information and central elements remain visible
and have to be treated explicitly.\fi

\medskip

\noindent Let us give bellow the presentation of $\GG$ by generators and relations from \cite{FT25}. $C(\GG)$ is the universal unital C*-algebra generated by elements $\nu_\gamma(g)$ for $\gamma\in\Lambda$ and $g\in \Gamma$ and by $C^*(\Lambda)$ with relations,
\begin{itemize}
     \item $(\nu_\gamma(g))^*=\nu_{\gamma^{-1}}(g^{-1})$ and $\nu_\gamma(1)=\delta_{\gamma,1}1$ for all $g\in\Gamma$ and $\gamma\in\Lambda$.
    \item $\nu_\gamma(gh)=\sum_{r,s\in\Lambda,\,rs=\gamma}\nu_r(g)\nu_s(h)$, for all $g,h\in\Gamma$, $\gamma\in\Lambda$,
    \item $s\nu_{rs}(g)=\nu_{sr}(g)s$, for all $g\in\Gamma$, $r,s\in\Lambda$,
    \end{itemize}

\noindent The comultiplication on $C(\GG)$ is the unique unital $*$-homomorphism $\Delta\,:\, C(\GG)\rightarrow C(\GG)\ot C(\GG)$ such that:
$$\Delta(\gamma)=\gamma\ot\gamma\quad\text{and}\quad\Delta(\nu_\gamma(g))=\sum_{r,s\in\Lambda,\,rs=\gamma} \nu_r(g)s\ot\nu_s(g)\quad\text{for all }\gamma\in\Lambda,\,g\in\Gamma.$$

\medskip

\noindent It has been proved in \cite[Theorem A]{FT25} that $\widehat{\GG}$ has the Haagurp property if and only if $\Gamma$ has the Haagerup property, $\widehat{\GG}$ is hyperlinear if and only if $\Gamma$ is hyperlinear and $\GG$ is co-amenable if and only if $\Gamma=\Lambda=\Z_2$.

\medskip

\noindent Our first result is an explicit combinatorial formula for the Haar state on $\GG$, which is not known in the general setting of \cite{FT25}. Given $n\geq 1$, $\vec{\gamma}=(\gamma_1,\dots,\gamma_n)\in\Lambda^n$ and $\vec{g}=(g_1,\dots g_n)\in \Gamma^n$ define $$\nu_{\vec{\gamma}}(\vec{g}):=\nu_{\gamma_1}(g_1)\dots\nu_{\gamma_n}(g_n)\in C(\GG).$$
Let $\NC(n)$ be the set of non-crossing partitions on $\{1,\dots,n\}$. We denote by $\NC(\vec{\gamma})$ the set of $\pi\in\NC(n)$ such that, for each block $\{r_1<\dots<r_s\}\in\pi$ one has $\gamma_{r_1}\dots\gamma_{r_s}=1$. We use the same notation $\NC(\vec{g})$ for $\vec{g}\in\Gamma^n$ and we also write $\vec{g}\vert_V:=(g_{r_1},\dots, g_{r_s})$. Define, for $\pi\in\NC(\vec{g})$ and $V\in\pi$, $\mu_V(\vec{g}\vert_V):=\sum_{\sigma \in \NC(\vec{g}_{\restriction V})}\mu(\sigma,1_{|V|})$ and $\mu_\pi(\vec{g}):=\prod_{V\in \pi }\mu_{V}(\vec{g}\vert_V)$, where $\mu$ be the M\"obius function on $\NC(n)$ (see \cite{nica2006lectures}).
 
 \begin{mainthm}\label{ThmA}
 The Haar state $h\in C(\GG)^*$ is the unique state such that, for all $n\geq 1$, $\vec{\gamma}\in\Lambda^n$, $\vec{g}\in\Gamma^n$ with $g_i\neq 1$ for all $i$, and all $s\in\Lambda$, one has:
  $$h(\nu_{\vec{\gamma}}({\vec{g}})s)=\delta_{s,1}\sum_{\pi \in \NC(\vec{g})\cap \NC(\vec{\gamma})} \frac{\mu_{\pi}(\vec{g})}{|\Lambda|^{n-|\pi|}}.$$
 \end{mainthm}
 
 \noindent The main new idea to prove Theorem \ref{ThmA} is an explicit combinatorial description, by using free probability technics, of the canonical state on the C*-algebra $\mathcal{U}$ of the universal quantum homomorphism from $C^*(\Gamma)$ to $C^*(\Lambda)$, as defined in \cite{FMP24} (see Theorem \ref{ThmUnivState}). Then, we are able to identify $C(\GG)$ with a crossed product $\Ucal\rtimes\Lambda$ and show that the dual state of the canonical state on $\Ucal$ is actually the Haar state.

 \medskip
 
 \noindent Using the Haar state formula, we can study the structure of the reduced C*-algebra $C_r(\GG)$ and the von Neumann algebra $\Linf(\GG)$.
 
\iffalse \begin{theoremA}\label{ThmB}
If $\Gamma$ is icc and $\Lambda$ has trivial center then $C_r(\GG)$ is simple with unique trace and $\Linf(\GG)$ is a full ${\rm II}_1$-factor.
 \end{theoremA}

\noindent We actually prove a more general result, without assuming that $\Lambda$ has trivial center in which case $\Linf(\GG)$ is not a factor but we can compute its center, which is ${\rm L}(\mathcal{Z}(\Lambda))$, whenever $\Lambda$ is non-trivial.\fi

\begin{mainthm}\label{ThmB}
Let \(Z=\mathcal Z(\Lambda)\), \(Q=\Lambda/Z\), and $p_\chi=|Z|^{-1}\sum_{z\in Z}\overline{\chi(z)}u_z,\chi\in\widehat Z .$
Then
\[
\Linf(\GG)=\bigoplus_{\chi\in\widehat Z}\Linf(\GG)p_\chi,
\qquad
C_r(\GG)=\bigoplus_{\chi\in\widehat Z}C_r(\GG)p_\chi .
\]
Moreover, for any section \(s:Q\to\Lambda\) with \(s(e)=e\), setting
\[
c_\chi(q,r)=\chi\bigl(s(q)s(r)s(qr)^{-1}\bigr),
\qquad q,r\in Q,
\]
one has
\[
\Linf(\GG)p_\chi\cong \Ucal''\rtimes_{\bar\alpha,c_\chi}Q,
\qquad
C_r(\GG)p_\chi\cong \Ucal_r\rtimes_{r,\bar\alpha,c_\chi}Q,
\]
where \(\bar\alpha\) is the action induced by \(\alpha\).

If \(\Gamma\) is icc and \(\Lambda\) is non-trivial, these decompositions are
central: each \(\Linf(\GG)p_\chi\) is a full \(\mathrm{II}_1\)-factor and each
\(C_r(\GG)p_\chi\) is simple with a unique trace. Consequently,
\[
\mathcal Z(\Linf(\GG))\cong {\rm L}(Z),
\qquad
\mathcal Z(C_r(\GG))\cong C^*(Z).
\]
In particular, if \(Z=\{e\}\), then \(C_r(\GG)\) is simple with unique trace
and \(\Linf(\GG)\) is a full \(\mathrm{II}_1\)-factor. More generally, for fixed
icc \(\Gamma\), finite groups \(\Lambda\) with different values of
\(|\mathcal Z(\Lambda)|\) give rise to non-isomorphic associated operator
algebras, hence to non-isomorphic compact quantum groups.
\end{mainthm}

\medskip

\noindent  Theorem \ref{ThmB} follows from Theorem \ref{ThmA} since it allows to identify $C_r(\GG)$ and ${\rm L}^\infty(\GG)$ with respectively the C*-reduced crossed product and the von Neumann crossed product. Then, we may combine the results of \cite{FMP24} in which the GNS-algebras of the canonical state on $\Ucal$ are studied with the results of Ioana-Peterson-Popa \cite[Theorem 1.1]{MR2386109} and B\'edos \cite[Theorem 1]{MR1302613} to deduce Theorem \ref{ThmB}.

\medskip

\noindent
The explicit Haar state formula is also expected to be useful for the
representation theory of these quantum groups.  It gives a concrete moment
description in terms of non-crossing partitions, which complements the
partition-theoretic description of the representation category obtained in
the companion work \cite{Qiu26}.

\subsection*{Acknowledgements} Pierre Fima acknowledge the support of CNRS IEA GAOA and ANR CroCQG.
The authors would like to thank Sheng Yin for
helpful conversations.

%%%%%%%%%%%%%%%%%%%%%%%%
\section{Preliminaries}
%%%%%%%%%%%%%%%%%%%%%%%%

\noindent The same symbol $\ot$ will be used to denote the tensor product of Hilbert spaces as well as the minimal tensor product of C*-algebras.

%%%%%%%%%%%%%%%%%%%%%%%%
\subsection{Crossed products}
%%%%%%%%%%%%%%%%%%%%%%%%

Let $(M,\omega)$ be a von Neumann algebra with a faithful normal state $\omega$ and $\alpha\,:\,\Lambda\curvearrowright M$ a state preserving action of a discrete countable group $\Lambda$. We recall that the crossed product $M\rtimes\Lambda$ is the unique, up to a canonical isomorphism, von Neumann algebra $N$ satisfying the following properties:
\begin{itemize}
\item $M\subset N$ is a von Neumann subalgebra and there is a group homomorphism $\Lambda\rightarrow\mathcal{U}(N)$, $\gamma\mapsto u_\gamma$ such that $u_\gamma x u_\gamma^*=\alpha_\gamma(x)$ for all $\gamma\in\Lambda$ and $x\in M$.
\item $N=\left( M\cup u(\Lambda)\right)''$.
\item There is a faithful normal conditional expectation $E\,:\, N\rightarrow M$ such that
$$E(xu_\gamma)=\delta_{\gamma,1}x\quad\text{for all }\gamma\in\Lambda,\, x\in M.$$
\end{itemize}

\noindent We use the standard notation $N=M\rtimes\Lambda$. Then, the faithful normal state $\widetilde{\omega}:=\omega\circ E$ is called \textit{the dual state}. It is the unique f.n.s. on $M\rtimes\Lambda$ satisfying $\widetilde{\omega}(xu_\gamma)=\delta_{\gamma,1}\omega(x)$. 

\medskip

\noindent The following simple Lemma is well-known, we include a proof for the reader's convenience. Let $\mathcal{Z}(\Lambda):=\{r\in\Lambda:rs=sr\text{ for all }s\in\Lambda\}$ be the center of $\Lambda$.

\begin{lemma}\label{LemCenterCrossedProduct}
Let \(M\) be a finite factor and let \(\Lambda\) be a finite group. Let $N=M\rtimes_\alpha\Lambda$
be the von Neumann algebraic crossed product, with canonical unitaries
\((u_r)_{r\in\Lambda}\). Put \(Z=\mathcal Z(\Lambda)\). Assume that
$Z\subset\ker(\Lambda\curvearrowright M)$
and that $\alpha_r \text{ is outer for every } r\in\Lambda\setminus Z.$
Then
\[
\mathcal Z(N)=L(Z)=\operatorname{span}\{u_z:z\in Z\}
\cong \ell^\infty(\widehat Z).
\]
\end{lemma}

\begin{proof}
Since \(Z\subset\ker(\Lambda\curvearrowright M)\), each \(u_z\), \(z\in Z\),
commutes with \(M\). Since \(z\in\mathcal Z(\Lambda)\), \(u_z\) also commutes with
every \(u_r\), \(r\in\Lambda\). Hence $L(Z)=\operatorname{span}\{u_z:z\in Z\}\subset \mathcal Z(N).$

Conversely, let \(x\in\mathcal Z(N)\). Since \(\Lambda\) is finite, \(x\) has a
finite expansion$x=\sum_{r\in\Lambda}a_ru_r, a_r\in M.$
For \(a\in M\), the equality \(ax=xa\) gives
\[
\sum_{r\in\Lambda}aa_ru_r
=
\sum_{r\in\Lambda}a_ru_ra
=
\sum_{r\in\Lambda}a_r\alpha_r(a)u_r .
\]
By uniqueness of coefficients, $aa_r=a_r\alpha_r(a), a\in M,\ r\in\Lambda.$

Assume that \(a_r\neq0\). From $aa_r=a_r\alpha_r(a), a\in M,$
we get, after taking adjoints, $a_r^*a=\alpha_r(a)a_r^*, a\in M.$
Thus $\alpha_r(a)a_r^*a_r
=a_r^*aa_r=
a_r^*a_r\alpha_r(a),
 a\in M.$
Hence
\[
a_r^*a_r\in \alpha_r(M)'\cap M=M'\cap M=\mathbb C1.
\]
Similarly, $aa_ra_r^*=
a_r\alpha_r(a)a_r^*=a_ra_r^*a, a\in M,$
so $a_ra_r^*\in M'\cap M=\mathbb C1.$
Therefore $a_r^*a_r=\lambda 1,a_ra_r^*=\mu 1$
for some \(\lambda,\mu>0\). Since \(M\) is finite, let \(\tau\) be its faithful normal tracial state. Then
$\lambda\tau(1)
=\tau(a_r^*a_r)
=\tau(a_ra_r^*)
=\mu\tau(1).$
Hence \(\lambda=\mu\). Hence $a_r^*a_r=a_ra_r^*=\lambda1.$
Thus $v:=\lambda^{-1/2}a_r$
is a unitary in \(M\). The relation \(aa_r=a_r\alpha_r(a)\) gives $av=v\alpha_r(a), a\in M,$
and therefore $\alpha_r(a)=v^*av, a\in M.$
Thus \(\alpha_r\) is inner. By assumption, this implies \(r\in Z\). Hence $a_r=0,r\notin Z.$

Now let \(r\in Z\). Since \(Z\subset\ker(\Lambda\curvearrowright M)\), we have
$\alpha_r=\id_M.$
The relation $aa_r=a_r\alpha_r(a)$
therefore becomes $aa_r=a_ra, a\in M.$
Since \(M\) is a factor, \(a_r\in\mathbb C1\). Thus \(x\) is a finite linear
combination of \((u_z)_{z\in Z}\) with scalar coefficients. Hence $x\in L(Z).$
Therefore $\mathcal Z(N)=L(Z)=\operatorname{span}\{u_z:z\in Z\}.$

Finally, since \(Z\) is a finite abelian group, the Fourier transform identifies
\[
L(Z)\cong \ell^\infty(\widehat Z).
\]
This proves the lemma.
\end{proof}
\begin{corollary}\label{CorCenterDecompositionCrossedProduct}
In the situation of Lemma~\ref{LemCenterCrossedProduct}, the center of \(N\) is
finite-dimensional. More precisely,
\[
\mathcal Z(N)
=
L(Z)
=
\bigoplus_{\chi\in\widehat Z}\mathbb C p_\chi
\cong
\ell^\infty(\widehat Z),
\]
where $p_\chi=
|Z|^{-1}\sum_{z\in Z}\overline{\chi(z)}u_z,\chi\in\widehat Z .$
Moreover, the family \((p_\chi)_{\chi\in\widehat Z}\) is precisely the complete
family of minimal central projections of \(N\). Hence the central decomposition of
\(N\) is
\[
N=\bigoplus_{\chi\in\widehat Z}Np_\chi,
\]
and each summand \(Np_\chi\) is a factor.
\end{corollary}

\begin{proof}
By Lemma~\ref{LemCenterCrossedProduct},
$\mathcal Z(N)=L(Z)=\operatorname{span}\{u_z:z\in Z\}.$
Since \(Z\) is a finite abelian group, the Fourier transform identifies $L(Z)\cong \ell^\infty(\widehat Z).$
Under this identification, the point-mass projections of
\(\ell^\infty(\widehat Z)\) are precisely $p_\chi
=|Z|^{-1}\sum_{z\in Z}\overline{\chi(z)}u_z,
\chi\in\widehat Z .$
Thus the \(p_\chi\)'s are precisely the minimal projections of \(L(Z)\). Since
\(\mathcal Z(N)=L(Z)\), they are precisely the minimal central projections of
\(N\). In particular, $1=\sum_{\chi\in\widehat Z}p_\chi,p_\chi p_\psi=\delta_{\chi,\psi}p_\chi .$
Therefore $N=\bigoplus_{\chi\in\widehat Z}Np_\chi .$
Finally, for every \(\chi\in\widehat Z\), $\mathcal Z(Np_\chi)=\mathcal Z(N)p_\chi=\mathbb C p_\chi,$
because \(p_\chi\) is minimal in \(\mathcal Z(N)\). Hence each \(Np_\chi\) is a
factor, and $\mathcal Z(N)
=\bigoplus_{\chi\in\widehat Z}\mathbb C p_\chi
\cong
\ell^\infty(\widehat Z).$
\end{proof}

\begin{lemma}\label{LemFaithfulCharacterCut}
Let \(M\) be a von Neumann algebra and let $N=M\rtimes_\alpha \Lambda .$
Let \(Z\leq \Lambda\) be a finite subgroup and let \(\chi\in\widehat Z\). Set
\[
p_\chi=\frac1{|Z|}\sum_{z\in Z}\overline{\chi(z)}u_z.
\]
Assume that \(p_\chi\in\mathcal Z(N)\). Then $\iota_\chi:M\to Mp_\chi,\iota_\chi(x)=xp_\chi$
is a normal faithful \(*\)-homomorphism onto \(Mp_\chi\). Hence $M\simeq Mp_\chi .$
\end{lemma}

\begin{proof}
Since \(p_\chi\in\mathcal Z(N)\), the map \(x\mapsto xp_\chi\) is a normal
\(*\)-homomorphism from \(M\) onto \(Mp_\chi\).

It remains to prove injectivity. Suppose that \(xp_\chi=0\). Then
\[
0=xp_\chi
=
\frac1{|Z|}\sum_{z\in Z}\overline{\chi(z)}xu_z .
\]
This is a finite expansion in the crossed product \(N=M\rtimes_\alpha\Lambda\).
By uniqueness of  coefficients, the coefficient of \(u_e\) must be zero.
Since \(\chi(e)=1\), this coefficient is $\frac1{|Z|}x .$
Hence \(x=0\). Therefore \(\iota_\chi\) is faithful, and consequently it is a
normal \(*\)-isomorphism from \(M\) onto \(Mp_\chi\).
\end{proof}

\begin{corollary}\label{CorTwistedSummandsIIoneFactors}
In the setting of Lemma~\ref{LemCentralSummandTwistedCrossedProduct}, assume in
addition that \(M\) is a \(\mathrm{II}_1\)-factor and that $\alpha_r \text{ is outer for every } r\in\Lambda\setminus Z.$
Then
\[
N=\bigoplus_{\chi\in\widehat Z}Np_\chi
\]
is the central decomposition of \(N\), and each summand \(Np_\chi\) is a
\(\mathrm{II}_1\)-factor. 
\end{corollary}

\begin{proof}
By Lemma~\ref{LemCenterCrossedProduct},
\[
\mathcal Z(N)=L(Z)\cong \ell^\infty(\widehat Z).
\]
Hence the projections
\[
p_\chi
=
|Z|^{-1}\sum_{z\in Z}\overline{\chi(z)}u_z,
\qquad \chi\in\widehat Z,
\]
are precisely the minimal central projections of \(N\). Therefore $N=\bigoplus_{\chi\in\widehat Z}Np_\chi$
is the central decomposition of \(N\), and each \(Np_\chi\) is a factor.

The canonical trace on \(N\) restricts to a faithful normal tracial state on each
\(Np_\chi\). Moreover, \(Mp_\chi\subset Np_\chi\), and the map
\[
M\to Mp_\chi,\qquad x\mapsto xp_\chi
\]
is a \(*\)-isomorphism. Since \(M\) is a \(\mathrm{II}_1\)-factor, \(Mp_\chi\) is
diffuse. Thus \(Np_\chi\) contains a diffuse von Neumann subalgebra, and hence is
not finite-dimensional. Therefore \(Np_\chi\) is a \(\mathrm{II}_1\)-factor.

\end{proof}

\begin{definition}[Scalar cocycles and twisted crossed products]
Let \(G\) be a discrete group and let \(B\) be a von Neumann algebra. An action
\(\beta:G\curvearrowright B\) means a group homomorphism
\[
\beta:G\to\Aut(B),\qquad g\mapsto\beta_g .
\]
A normalized scalar \(2\)-cocycle on \(G\) is a map \(c:G\times G\to\mathbb T\)
such that
\[
c(g,h)c(gh,k)=c(h,k)c(g,hk),\qquad g,h,k\in G,
\]
and
\[
c(e,g)=c(g,e)=1,\qquad g\in G.
\]

Let \(\beta:G\curvearrowright B\) be an action and let
\(c\in Z^2(G,\mathbb T)\). We use the spatial von Neumann algebraic twisted
crossed product. Namely, choose a faithful normal representation
\(B\subset B(H)\), and define a representation \(\pi_\beta:B\to B(H\otimes\ell^2(G))\)
and unitaries \((\lambda_g^c)_{g\in G}\) by
\[
\pi_\beta(x)(\xi\otimes\delta_t)
=
\beta_{t^{-1}}(x)\xi\otimes\delta_t,
\qquad
\lambda_g^c(\xi\otimes\delta_t)
=
c(g,t)\xi\otimes\delta_{gt}.
\]
The twisted crossed product is
\[
B\bar\rtimes_{\beta,c}G
:=
\bigl(\pi_\beta(B)\cup\{\lambda_g^c:g\in G\}\bigr)''
\subset B(H\otimes\ell^2(G)).
\]
This definition is independent, up to the canonical spatial isomorphism, of the
chosen faithful normal representation of \(B\).

Identifying \(B\) with \(\pi_\beta(B)\), the generators satisfy
\[
\lambda_g^c x(\lambda_g^c)^*=\beta_g(x),
\qquad x\in B,\ g\in G,
\]
and
\[
\lambda_g^c\lambda_h^c=c(g,h)\lambda_{gh}^c,
\qquad g,h\in G.
\]
Moreover, the algebraic span of the elements \(x\lambda_g^c\), \(x\in B\),
\(g\in G\), is \(\sigma\)-weakly dense in \(B\bar\rtimes_{\beta,c}G\), and
\[
(x\lambda_g^c)(y\lambda_h^c)
=
x\beta_g(y)c(g,h)\lambda_{gh}^c .
\]
\end{definition}

\begin{lemma}\label{LemCentralSummandTwistedCrossedProduct}
Let \(M\) be a finite von Neumann algebra and \(\Lambda\) be a finite group, let $N=M\rtimes_\alpha\Lambda$ be the von Neumann algebraic crossed product, with canonical unitaries
\((u_r)_{r\in\Lambda}\). Put \(Z=\mathcal Z(\Lambda)\) and \(Q=\Lambda/Z\), and assume
that \(Z\) is finite and $Z\subset\ker(\Lambda\curvearrowright M).$
For \(\chi\in\widehat Z\), set $p_\chi=|Z|^{-1}\sum_{z\in Z}\overline{\chi(z)}u_z .$
Then \(p_\chi\in\mathcal Z(N)\), and for every section \(s:Q\to\Lambda\) with
\(s(e)=e\), the central summand \(Np_\chi\) is isomorphic to the twisted crossed
product
\[
\Phi_\chi:M\rtimes_{\bar\alpha,c_\chi}Q
\overset{\simeq}{\longrightarrow} Np_\chi,
\qquad
\Phi_\chi(x)=xp_\chi,\qquad
\Phi_\chi(\lambda_q^\chi)=u_{s(q)}p_\chi .
\]
Here \(\bar\alpha:Q\curvearrowright M\) is the quotient action $\bar\alpha_{\bar r}(x)=\alpha_r(x),$
and \(c_\chi\) is the normalized scalar \(2\)-cocycle $c_\chi(q,r)=\chi\bigl(s(q)s(r)s(qr)^{-1}\bigr), q,r\in Q.$
The twisted canonical unitaries \((\lambda_q^\chi)_{q\in Q}\) satisfy $\lambda_q^\chi x(\lambda_q^\chi)^*=\bar\alpha_q(x), \lambda_q^\chi\lambda_r^\chi=c_\chi(q,r)\lambda_{qr}^\chi .$
\end{lemma}

\begin{proof}
Since \(Z\subset\ker(\Lambda\curvearrowright M)\), the formula
\(\bar\alpha_{\bar r}=\alpha_r\) is well-defined and gives a genuine action of \(Q\)
on \(M\). For \(z\in Z\), we have \(\alpha_z=\id\), hence $u_zx=xu_z, x\in M.$
Also, since \(z\in\mathcal Z(\Lambda)\), we have $u_zu_r=u_ru_z, r\in\Lambda.$
Thus \(u_z\in\mathcal Z(N)\), and hence \(p_\chi\in\mathcal Z(N)\). Moreover,
\[
u_zp_\chi=\chi(z)p_\chi,\qquad z\in Z.
\]

Let \(s:Q\to\Lambda\) be a section with \(s(e)=e\). Since \(s(q)s(r)\) and
\(s(qr)\) have the same image in \(Q\), we have $\omega(q,r):=s(q)s(r)s(qr)^{-1}\in Z.$
Then $\omega(e,q)=\omega(q,e)=e.$
Moreover, by associativity, $(s(q)s(r))s(t)=s(q)(s(r)s(t)).$
Using the defining relation for \(\omega\), the left-hand side is $\omega(q,r)\omega(qr,t)s(qrt),$
whereas the right-hand side is $s(q)\omega(r,t)s(rt)
=\omega(r,t)s(q)s(rt)
=\omega(r,t)\omega(q,rt)s(qrt),$
because \(\omega(r,t)\in Z\) is central. Hence $\omega(q,r)\omega(qr,t)=\omega(r,t)\omega(q,rt).$
Applying the character \(\chi:Z\to\mathbb T\), we obtain $c_\chi(q,r)c_\chi(qr,t)
=c_\chi(r,t)c_\chi(q,rt).$
Also, $c_\chi(e,q)=c_\chi(q,e)=1.$
Thus \(c_\chi\) is a normalized scalar \(2\)-cocycle on \(Q\).

Let $\iota_\chi:M\to Mp_\chi,\iota_\chi(x)=xp_\chi .$
\iffalse Since \(p_\chi\in\mathcal Z(N)\), this is a normal \(*\)-homomorphism onto
\(Mp_\chi\). We show that it is injective. If \(xp_\chi=0\), then
$0=xp_\chi=|Z|^{-1}\sum_{z\in Z}\overline{\chi(z)}xu_z .$
Since this is a finite sum in \(N=M\rtimes_\alpha\Lambda\), uniqueness of coefficients gives $\overline{\chi(z)}x=0, z\in Z.$
Hence \(x=0\). Therefore \(\iota_\chi\) is faithful, and hence\fi By Lemma\ref{LemFaithfulCharacterCut}, it's a normal
\(*\)-isomorphism onto \(Mp_\chi\).

We transport the quotient action \(\bar\alpha:Q\curvearrowright M\) to \(Mp_\chi\)
through \(\iota_\chi\). Namely, define
\[
\beta_q^\chi:Mp_\chi\to Mp_\chi,\qquad
\beta_q^\chi(xp_\chi)=\bar\alpha_q(x)p_\chi ,
\qquad x\in M,\ q\in Q.
\]
This is well-defined because \(\iota_\chi\) is injective. Equivalently, $\beta_q^\chi=\iota_\chi\circ\bar\alpha_q\circ\iota_\chi^{-1}.$
Thus \(\beta^\chi:Q\curvearrowright Mp_\chi\) is an action.

Set $v_q=u_{s(q)}p_\chi\in Np_\chi, q\in Q.$
Then \(v_q\) is a unitary in the von Neumann algebra \(Np_\chi\)( with unit \(p_\chi\)). Indeed,
\[
v_qv_q^*
=
u_{s(q)}p_\chi u_{s(q)}^*p_\chi
=
p_\chi,
\qquad
v_q^*v_q=p_\chi.
\]
For \(x\in M\), we have $v_q(xp_\chi)v_q^*
=
u_{s(q)}xu_{s(q)}^*p_\chi
=
\alpha_{s(q)}(x)p_\chi
=
\bar\alpha_q(x)p_\chi
=
\beta_q^\chi(xp_\chi).$
Thus \(v_q\) implements the transported action \(\beta_q^\chi\) on \(Mp_\chi\).

Moreover, $v_qv_r
=
u_{s(q)}u_{s(r)}p_\chi
=
u_{\omega(q,r)}u_{s(qr)}p_\chi
=
\chi(\omega(q,r))u_{s(qr)}p_\chi
=
c_\chi(q,r)v_{qr}.$
Hence the pair $\bigl(\iota_\chi,(v_q)_{q\in Q}\bigr)$ is a \(c_\chi\)-twisted covariant representation of
\((M,Q,\bar\alpha)\) inside \(Np_\chi\), or equivalently
\((v_q)_{q\in Q}\) is a projective implementation of the transported action
\(\beta^\chi\) on \(Mp_\chi\).

Therefore, by the defining relations of the twisted crossed product, there is a
normal \(*\)-homomorphism
\[
\Phi_\chi:M\rtimes_{\bar\alpha,c_\chi}Q\to Np_\chi
\]
given by $\Phi_\chi(x)=xp_\chi,
\Phi_\chi(\lambda_q^\chi)=v_q=u_{s(q)}p_\chi.$

\iffalse Set
\[
v_q=u_{s(q)}p_\chi\in Np_\chi,\qquad q\in Q.
\]
Then \(v_q\) is a unitary in \(Np_\chi\), whose unit is \(p_\chi\). For \(x\in M\),
\[
v_q(xp_\chi)v_q^*
=
u_{s(q)}xu_{s(q)}^*p_\chi
=
\alpha_{s(q)}(x)p_\chi
=
\bar\alpha_q(x)p_\chi.
\]
Moreover,
\[
v_qv_r
=
u_{s(q)}u_{s(r)}p_\chi
=
u_{\omega(q,r)}u_{s(qr)}p_\chi
=
\chi(\omega(q,r))v_{qr}
=
c_\chi(q,r)v_{qr}.
\]
Hence \(x\mapsto xp_\chi\) and \(\lambda_q^\chi\mapsto v_q\) satisfy the defining
relations of \(M\rtimes_{\bar\alpha,c_\chi}Q\). Therefore they induce a normal
\(*\)-homomorphism
\[
\Phi_\chi:M\rtimes_{\bar\alpha,c_\chi}Q\to Np_\chi .
\]\fi

We show that \(\Phi_\chi\) is onto. Since \(N\) is generated by \(M\) and
\((u_r)_{r\in\Lambda}\), the summand \(Np_\chi\) is generated by \(Mp_\chi\) and
the elements \(u_rp_\chi\), \(r\in\Lambda\). If \(q=\bar r\), then
\(r=s(q)z\) for some \(z\in Z\). Therefore
\[
u_rp_\chi
=
u_{s(q)}u_zp_\chi
=
\chi(z)v_q .
\]
Thus every generator \(u_rp_\chi\) lies in the range of \(\Phi_\chi\), and hence
\(\Phi_\chi\) is surjective.

\iffalse It remains to prove injectivity. Let
\[
E_c:M\rtimes_{\bar\alpha,c_\chi}Q\to M
\]
be the canonical faithful normal conditional expectation, and define
\[
F_\chi:Np_\chi\to Mp_\chi,\qquad
F_\chi(y)=|Z|E_M(y)p_\chi .
\]
For algebraic sums one has
\[
F_\chi\circ\Phi_\chi=\iota_\chi\circ E_c.
\]
Indeed,
\[
F_\chi\left(\sum_{q\in F}x_q u_{s(q)}p_\chi\right)
=
x_e p_\chi.
\]
By normality, this identity holds on the whole crossed product. Now let
\(T\in M\rtimes_{\bar\alpha,c_\chi}Q\) and assume \(\Phi_\chi(T)=0\). Then
\[
0
=
F_\chi\bigl(\Phi_\chi(T^*T)\bigr)
=
\iota_\chi\bigl(E_c(T^*T)\bigr).
\]
Since \(\iota_\chi\) is faithful, \(E_c(T^*T)=0\). Since \(E_c\) is faithful,
\(T=0\). Hence \(\Phi_\chi\) is injective, and therefore an isomorphism.\fi

It remains to prove injectivity. Since \(Q\) is finite, every element of
\(M\rtimes_{\bar\alpha,c_\chi}Q\) is a finite sum
\[
T=\sum_{q\in Q}x_q\lambda_q^\chi,\qquad x_q\in M.
\]
Assume that \(\Phi_\chi(T)=0\). Then
\[
0
=
\sum_{q\in Q}x_q u_{s(q)}p_\chi .
\]
Using
\[
p_\chi=|Z|^{-1}\sum_{z\in Z}\overline{\chi(z)}u_z,
\]
we get
\[
0
=
|Z|^{-1}
\sum_{q\in Q}\sum_{z\in Z}
\overline{\chi(z)}x_q u_{s(q)}u_z
=
|Z|^{-1}
\sum_{q\in Q}\sum_{z\in Z}
\overline{\chi(z)}x_q u_{s(q)z}.
\]
Since the elements \(s(q)z\), with \(q\in Q\) and \(z\in Z\), run through
\(\Lambda\) without repetition, uniqueness of  Fourier expansions in
\(N=M\rtimes_\alpha\Lambda\) gives
\[
\overline{\chi(z)}x_q=0,
\qquad q\in Q,\ z\in Z.
\]
Hence \(x_q=0\) for every \(q\in Q\). Therefore \(T=0\), so \(\Phi_\chi\) is
injective. Thus \(\Phi_\chi\) is an isomorphism.

\end{proof}
\begin{remark}\label{RmkCrossed}
The same result holds for      reduced crossed-products\iffalse, by using the universal property of C*-algebraic full/reduced crossed-products. C*-algebraic full/\fi
\end{remark}

\begin{corollary}\label{CorTwistedSummandsIIoneFactors}
For every \(\chi\in\widehat Z\), $M\rtimes_{\bar\alpha,c_\chi}Q$
is a \(\mathrm{II}_1\)-factor.
\end{corollary}

\begin{proof}
 By Lemma\ref{CorTwistedSummandsIIoneFactors} and Lemma~\ref{LemCentralSummandTwistedCrossedProduct}, $Np_\chi\cong M\rtimes_{\bar\alpha,c_\chi}Q.$ and \(M\rtimes_{\bar\alpha,c_\chi}Q\) is a \(\mathrm{II}_1\)-factor.
\end{proof}

\begin{theorem}[{\cite[Proposition~1.11]{PiPo86}}]\label{ThmPimsnerPopaFullFiniteIndex}
Let \(M\) be a \(\mathrm{II}_1\)-factor and let \(N\subset M\) be an irreducible
subfactor of finite index. Then \(M\) is full if and only if \(N\) is full.
In that case, for every free ultrafilter
\(\omega\in\beta\mathbb N\setminus\mathbb N\), one has
$N'\cap M^\omega=\mathbb C.$
\end{theorem}

\begin{corollary}\label{CorTwistedCrossedProductFull}
Let \(M\) be a full \(\mathrm{II}_1\)-factor and let
\(\alpha:Q\curvearrowright M\) be an outer action of a finite group \(Q\).
Let \(c:Q\times Q\to\mathbb T\) be a normalized scalar \(2\)-cocycle.
Then the twisted crossed product $M\rtimes_{\alpha,c}Q$
is a full \(\mathrm{II}_1\)-factor.
\end{corollary}

\begin{proof}
Put $B=M\rtimes_{\alpha,c}Q.$
Let \((u_q)_{q\in Q}\) be the canonical unitaries in \(B\). Thus
\[
u_qxu_q^*=\alpha_q(x),
\qquad
u_qu_r=c(q,r)u_{qr},
\qquad x\in M,\ q,r\in Q.
\]
 The canonical trace on \(B\) is given by $\tau_B\left(\sum_{q\in Q}x_qu_q\right)=\tau_M(x_e),
x_q\in M.$

We first prove that $M'\cap B=\mathbb C.$
Let $y=\sum_{q\in Q}x_qu_q\in M'\cap B.$
For every \(a\in M\), the equality \(ay=ya\) gives
\[
ax_q=x_q\alpha_q(a),
\qquad a\in M,\ q\in Q.
\]
Assume that \(q\ne e\) and \(x_q\ne0\). Then $x_qx_q^*\in M'\cap M=\mathbb C1$
and $x_q^*x_q\in \alpha_q(M)'\cap M=\mathbb C1.$
Since \(x_qx_q^*\) and \(x_q^*x_q\) belong to \(M'\cap M=\mathbb C1\), there
exist \(\lambda,\mu\geq0\) such that $x_qx_q^*=\lambda1, x_q^*x_q=\mu1.$
As \(x_q\ne0\), we have \(\lambda,\mu>0\). Since \(M\) is \(\mathrm{II}_1\)-factor, it carries a
faithful normal tracial state \(\tau\). Hence
\[
\lambda=\tau(x_qx_q^*)=\tau(x_q^*x_q)=\mu.
\]
Thus \(x_qx_q^*=x_q^*x_q=\lambda1\). Hence $v:=\lambda^{-1/2}x_q$
is a unitary in \(M\). Moreover, $av=v\alpha_q(a), a\in M.$
Therefore $\alpha_q(a)=v^*av,a\in M,$
which contradicts the outerness of \(\alpha_q\). Hence \(x_q=0\) for all
\(q\ne e\). Thus \(y=x_e\in M'\cap M=\mathbb C\), and consequently $M'\cap B=\mathbb C.$
In particular, \(B\) is a \(\mathrm{II}_1\)-factor and \(M\subset B\) is an
irreducible subfactor.

It remains to compute the index. Let $\Omega_M\in L^2(M),
\Omega_B\in L^2(B)$
be the GNS cyclic vectors.For \(q\in Q\), set
\[
H_q=\overline{M u_q\Omega_B}^{\|\cdot\|_2}\subset L^2(B).
\]
Since every element of \(B\) has a unique expansion $b=\sum_{q\in Q}x_qu_q, x_q\in M,$
we have $B\Omega_B
=\sum_{q\in Q}M u_q\Omega_B.$
Since \(B\Omega_B\) is dense in \(L^2(B)\), it follows that
\[
L^2(B)=\overline{\sum_{q\in Q}H_q}^{\|\cdot\|_2}.
\]  Moreover, for \(x,y\in M\) and
\(q,r\in Q\), one has
\[
\begin{aligned}
\langle xu_q\Omega_B,yu_r\Omega_B\rangle
&=\tau_B(u_r^*y^*xu_q)  \\
&=\overline{c(r^{-1},r)}\,c(r^{-1},q)\,
\tau_B\!\left(\alpha_{r^{-1}}(y^*x)u_{r^{-1}q}\right).
\end{aligned}
\]
Hence this inner product is zero whenever \(q\ne r\). Therefore
\[
L^2(B)=\bigoplus_{q\in Q}^{\perp} H_q.
\]

For each \(q\in Q\), denote by \(L^2(M)_{\alpha_q}\) the right \(M\)-module
whose underlying Hilbert space is \(L^2(M)\) and whose right action is
\[
\xi\cdot a=\xi\,\alpha_q(a),
\qquad \xi\in L^2(M),\ a\in M.
\]
Define, on the dense subspace \(M\Omega_M\),
\[
V_q:L^2(M)_{\alpha_q}\longrightarrow H_q,
\qquad
V_q(x\Omega_M)=xu_q\Omega_B.
\]
This map is isometric. Indeed,
\[
\|xu_q\Omega_B\|_2^2
=
\tau_B(u_q^*x^*xu_q)
=
\tau_M(x^*x)
=
\|x\Omega_M\|_2^2.
\]
It is also right \(M\)-linear, since for \(a\in M\),
\[
V_q\big((x\Omega_M)\cdot a\big)
=
V_q(x\alpha_q(a)\Omega_M)
=
x\alpha_q(a)u_q\Omega_B
=
xu_qa\Omega_B
=
V_q(x\Omega_M)\cdot a.
\]
Thus \(V_q\) extends to a unitary right \(M\)-module isomorphism $L^2(M)_{\alpha_q}\cong H_q.$

Since \(\alpha_q\) preserves \(\tau_M\), the formula $\widehat\alpha_q(x\Omega_M)=\alpha_q(x)\Omega_M,x\in M,
$
defines an isometry on \(M\Omega_M\), and hence extends uniquely to a
unitary $\widehat\alpha_q:L^2(M)\to L^2(M).$
Moreover,
\[
\widehat\alpha_q\big((x\Omega_M)\cdot a\big)
=
\alpha_q(xa)\Omega_M
=
\alpha_q(x)\alpha_q(a)\Omega_M
=
\widehat\alpha_q(x\Omega_M)\cdot a,
\]
where the right-hand side is computed in \(L^2(M)_{\alpha_q}\). Hence $\widehat\alpha_q:L^2(M)_M\longrightarrow L^2(M)_{\alpha_q}$
is a unitary right \(M\)-module isomorphism.

Composing the two unitary right \(M\)-module isomorphisms gives
\[
L^2(M)_M
\xrightarrow{\ \widehat\alpha_q\ }
L^2(M)_{\alpha_q}
\xrightarrow{\ V_q\ }
H_q.
\]
Thus $H_q\cong L^2(M)_M$
as right \(M\)-modules for every \(q\in Q\). Therefore
\[
L^2(B)_M
\cong
\bigoplus_{q\in Q}L^2(M)_M
\]
as Hilbert right \(M\)-modules, 
where the isomorphism is given on algebraic vectors by
\[
(x_q\Omega_M)_{q\in Q}\longmapsto \sum_{q\in Q}\alpha_q(x_q)u_q\Omega_B .
\]

We recall that the von Neumann dimension of a right \(M\)-module is defined
as follows. If $H_M\simeq p\bigl(\ell^2(\mathbb N)\otimes L^2(M)\bigr)_M$
for a projection $p\in B(\ell^2(\mathbb N))\bar\otimes M,$
then $\dim_M H=(\operatorname{Tr}\otimes\tau_M)(p).$
In particular,
\[
L^2(M)_M
\simeq
(e_{11}\otimes 1_M)\bigl(\ell^2(\mathbb N)\otimes L^2(M)\bigr)_M
\]
where \(e_{11}\) is the rank-one projection onto the first basis vector of
\(\ell^2(\mathbb N)\). Hence
\[
\dim_M L^2(M)_M
=
(\operatorname{Tr}\otimes\tau_M)(e_{11}\otimes 1_M)
=
\operatorname{Tr}(e_{11})\tau_M(1_M)
=
1.
\]

\[
[B:M]
=
\dim_M L^2(B)_M
=
\sum_{q\in Q}\dim_M L^2(M)_M
=
|Q|.
\]
Thus \(M\subset B\) is an irreducible finite-index subfactor. Since \(M\) is
full, Theorem~\ref{ThmPimsnerPopaFullFiniteIndex} implies that \(B\) is full.

\end{proof}

\begin{theorem}[{\cite[Theorem~1]{MR1302613}}]\label{ThmBedos}
Let \(A\) be a simple \(C^*\)-algebra with a unique trace \(\varphi\), and let
\((\alpha,u)\) be a \(\varphi\)-outer cocycle crossed action of a group \(G\) on
\(A\). Then the reduced twisted crossed product
\[
C_r^*(A,G,\alpha,u)
\]
is simple and has a unique trace, namely
\[
\tau=\varphi\circ E,
\]
where \(E:C_r^*(A,G,\alpha,u)\to A\) denotes the canonical conditional expectation.
\end{theorem}

%%%%%%%%%%%%%%%%%%%%%%%%
\subsection{Free probability}
%%%%%%%%%%%%%%%%%%%%%%%%

We recall bellow notions and results from free probability. We follow the approach of \cite{nica2006lectures}. In the sequel, we denote by $\NC(n)$ the set of non-crossing partitions on $\{1,\dots,n\}$.

\begin{definition}\label{DefMultFunct}
Let $\mathcal A$ be a unital algebra
 $(\varrho_n)_{n\geq1}$ a sequence of multilinear functionals on $\mathcal{A}$,
 $\varrho_n:\mathcal{A}^{n}\longrightarrow \mathbb{C}$. For $n\geq 1$ and $\pi\in\NC(n)$, we define the multilinear functional $\varrho_{\pi}$,
 $$\varrho_{\pi}:\mathcal{A}^{n}\longrightarrow \mathbb{C},(a_1,a_2,\dots,a_n)\mapsto\varrho_{\pi}[a_1,a_2,\dots,a_n],$$
 as follows. If $\pi$ is written with blocks $V_i$ i.e. $\pi = \{V_1,\dots,V_r\}\in\NC(n)$ then,
 $$\varrho_{\pi}[a_1,a_2,\dots,a_n]:=\varrho(V_1)[a_1,a_2,\dots,a_n]\cdots\varrho(V_r)[a_1,a_2,\dots,a_n],$$
 where we use the notation $\varrho(V)[a_1,a_2,\dots,a_n]:=\varrho_{s}(a_{i_1},\dots, a_{i_s})$ for $V=\{i_1<\cdots<i_s\}$. 
 Then $(\varrho_{\pi})_{n\geq 1 ,\pi \in NC(n)}$is called the \textit{multiplicative family of functionals on $\NC$ determined by the sequence $(\varrho_{n})_{n \geq 1}$}.
\end{definition}

\noindent We fix once and for all a unital algebra $\mathcal{A}$ and a unital linear function $\varphi\,:\,\mathcal{A}\rightarrow\C$. We then consider the sequence of multilinear
 functionals $\varphi_{n}\,:\,\mathcal{A}^n\rightarrow\C $ on $\mathcal A $ defined by $$\varphi_{n}(a_1,a_2,\dots,a_n):=\varphi(a_{1}a_{2}\cdots a_{n}).$$ For $n\geq 1$ and $\pi\in\NC(n)$, we denote by $\varphi_\pi\,:\,\mathcal{A}^n\rightarrow\C$ the associated multiplicative functionals on $\NC$ as introduced in Definition \ref{DefMultFunct}.

\begin{definition}
The corresponding \textit{free cumulants} $(\kappa_{\pi})_{\pi \in \NC}$ are the   multilinear functionals
  $\kappa_{\pi}:{\mathcal A}^{n} \longrightarrow \mathbb C $, for $n\geq 1$ and $\pi\in\NC(n)$ defined by:
  $$\kappa_{\pi}[a_1,a_2,\dots,a_n] :=\sum_{\sigma \in \NC(n),\sigma \leq \pi}\varphi_{\sigma}[a_1,a_2,\dots,a_n]\mu(\sigma,\pi),$$
  where $\mu$ is the M\"{o}bius function on $\NC(n)$.
\end{definition}

\noindent For each $n \geq 1$, we use the notation $\kappa_{n}:=\kappa_{1_{n}}$ i.e.: $$\kappa_{n}(a_1,a_2,\dots,a_n)=\sum_{\sigma \in \NC(n)}\varphi_{\sigma}[a_1,a_2,\dots,a_n]\mu(\sigma,1_{n}).$$
Note that each $\kappa_n$ is a multilinear functional on $\Acal^n$. We sometimes write $\kappa_\pi^\varphi$ and $\kappa_n^\varphi$ when we want to specify the state.

\begin{theorem}\label{ThmCumulant}
The following holds:
\begin{enumerate}
    \item $(\kappa_\pi)_\pi$ is the multiplicative family of functional on $\NC$ determined by $(\kappa_n)_n$: $$\kappa_{\pi}[a_1,a_2,\dots,a_n]:=\displaystyle\prod_{V\in \pi}\kappa(V)[a_1,a_2,\dots,a_n],$$
    \item $\varphi(a_1 a_2\cdots a_n)=\displaystyle\sum_{\pi \in \NC(n)}\kappa_{\pi}[a_1,a_2,\dots,a_n]$.
\end{enumerate}
\end{theorem}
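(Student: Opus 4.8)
The plan is to deduce both statements from Möbius inversion on the lattice $\NC(n)$ together with the factorization property of the intervals of $\NC(n)$; no analysis enters, everything is combinatorics of $\NC$.

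First I would isolate the two facts from \cite{nica2006lectures} that do the real work. (i) \emph{Möbius inversion}: given scalar families $(f(\sigma))_{\sigma\in\NC(n)}$ and $(g(\sigma))_{\sigma\in\NC(n)}$, the relations $g(\pi)=\sum_{\sigma\leq\pi}f(\sigma)$ for all $\pi$ hold if and only if $f(\pi)=\sum_{\sigma\leq\pi}g(\sigma)\mu(\sigma,\pi)$ for all $\pi$. (ii) \emph{Factorization of intervals}: if $\pi\in\NC(n)$ has blocks $V_1,\dots,V_r$, then (since $\sigma\leq\pi$ forces every block of $\sigma$ to lie inside a unique $V_i$, so that the blocks of $\sigma$ contained in $V_i$ form a partition $\sigma|_{V_i}$ of $V_i$) the restriction map $\sigma\mapsto(\sigma|_{V_1},\dots,\sigma|_{V_r})$ is an isomorphism of posets from the interval $[0_n,\pi]$ (with $0_n$ the partition into singletons) onto $\prod_{i=1}^r\NC(V_i)$, where $\NC(V_i)$ denotes the non-crossing partitions of the finite ordered set $V_i$, canonically identified with $\NC(|V_i|)$ through the unique order isomorphism $V_i\simeq\{1,\dots,|V_i|\}$. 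Since the Möbius function of a product of posets is the product of the Möbius functions, this gives $\mu(\sigma,\pi)=\prod_{i=1}^r\mu(\sigma|_{V_i},1_{V_i})$ for every $\sigma\leq\pi$.

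Next I would prove (1). Fix $n\geq 1$, $a_1,\dots,a_n\in\Acal$ and $\pi\in\NC(n)$ with blocks $V_1,\dots,V_r$. By Definition \ref{DefMultFunct}, $\varphi_\sigma$ is multiplicative over the blocks of $\sigma$, and grouping those blocks according to which $V_i$ contains them yields $\varphi_\sigma[a_1,\dots,a_n]=\prod_{i=1}^r\varphi_{\sigma|_{V_i}}\big[(a_j)_{j\in V_i}\big]$ whenever $\sigma\leq\pi$. Substituting this together with the Möbius factorization of (ii) into the definition of $\kappa_\pi$, and using the poset bijection of (ii) to turn the single sum over $\{\sigma\leq\pi\}$ into a sum over $\NC(V_1)\times\dots\times\NC(V_r)$, the sum of products factors as a product of sums:
\[
\kappa_\pi[a_1,\dots,a_n]=\prod_{i=1}^r\Big(\sum_{\tau\in\NC(V_i)}\varphi_\tau\big[(a_j)_{j\in V_i}\big]\,\mu(\tau,1_{V_i})\Big)=\prod_{i=1}^r\kappa_{|V_i|}\big((a_j)_{j\in V_i}\big),
\]
the last equality being just the definition $\kappa_{|V_i|}=\kappa_{1_{V_i}}$. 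The right-hand side is exactly $\prod_{V\in\pi}\kappa(V)[a_1,\dots,a_n]$, which is (1).

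Finally, (2) follows from (i) alone, independently of (1): the definition of $\kappa_\pi$ says precisely that, for fixed $a_1,\dots,a_n$, the families $\sigma\mapsto\kappa_\sigma[a_1,\dots,a_n]$ and $\sigma\mapsto\varphi_\sigma[a_1,\dots,a_n]$ play the roles of $f$ and $g$ in (i); hence $\varphi_\pi[a_1,\dots,a_n]=\sum_{\sigma\leq\pi}\kappa_\sigma[a_1,\dots,a_n]$ for all $\pi\in\NC(n)$, and taking $\pi=1_n$ with $\varphi_{1_n}[a_1,\dots,a_n]=\varphi(a_1\cdots a_n)$ gives (2). The one genuinely non-formal ingredient is the interval-factorization fact (ii); the points to be careful about there are the identification of $\NC(V_i)$ with $\NC(|V_i|)$, and the fact that the restriction maps $\sigma\mapsto\sigma|_{V_i}$ are order-preserving and jointly a bijection $[0_n,\pi]\to\prod_i\NC(V_i)$. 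Once (ii) is granted (see \cite{nica2006lectures}), the rest is bookkeeping.
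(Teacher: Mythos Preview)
Your proof is correct. Note, however, that the paper does not actually prove this statement: it is recorded in the preliminaries as a known result from \cite{nica2006lectures}, with no argument given. Your approach --- Möbius inversion on $\NC(n)$ for (2), and the interval factorization $[0_n,\pi]\simeq\prod_i\NC(V_i)$ together with multiplicativity of the Möbius function for (1) --- is precisely the standard proof one finds in that reference, so there is nothing to compare.
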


\begin{theorem}\label{VanishingCumulant} For unital subalgebras $(\mathcal{A}_{i})_{i\in I}$ of $\mathcal{A}$ the following are equivalent.
 \begin{enumerate}
     \item $(\mathcal{A}_{i})_{i\in I}$ are freely independent (with respect to $\varphi$).
     \item For all $n \geq 2 $ and for all $a_{j}\in\mathcal{A}_{i(j)}(j=1,\dots,n)$ with $i(1),\dots,i(n)\in I $ we have $\kappa_{n}(a_1,\dots,a_n)=0$ whenever there exist $1\leq l,k\leq n $ with $i(l)\neq i(k)$.
 \end{enumerate}
\end{theorem}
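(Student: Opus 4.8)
The plan is to establish the two implications separately. The implication $(2)\Rightarrow(1)$ will be a short consequence of the moment--cumulant formula, while $(1)\Rightarrow(2)$ carries essentially all of the content and will go by induction. Throughout I will use, from Theorem~\ref{ThmCumulant}, that $\kappa_1=\varphi$, that $\kappa_\pi[a_1,\dots,a_n]=\prod_{V\in\pi}\kappa(V)[a_1,\dots,a_n]$, and that $\varphi(a_1\cdots a_n)=\sum_{\pi\in\NC(n)}\kappa_\pi[a_1,\dots,a_n]$; and I will use two standard facts (recalled or reproved from \cite{nica2006lectures} in a line each): (i) $\kappa_m(b_1,\dots,b_m)=0$ whenever $m\ge2$ and some $b_l$ is a scalar multiple of $1$, and (ii) every $\pi\in\NC(n)$ has an \emph{interval block}, i.e.\ a block of the form $\{p,p+1,\dots,q\}$. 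Given a tuple $(i(1),\dots,i(n))$ of indices, call a block $V$ \emph{constant} if $j\mapsto i(j)$ is constant on $V$.

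For $(2)\Rightarrow(1)$ I will verify the defining property of free independence: for $a_j\in\mathcal{A}_{i(j)}$ with $\varphi(a_j)=0$ and with consecutive indices distinct, $i(1)\ne i(2)\ne\cdots\ne i(n)$, I expand $\varphi(a_1\cdots a_n)=\sum_{\pi\in\NC(n)}\kappa_\pi[a_1,\dots,a_n]$. Fixing $\pi$ and choosing an interval block $V$: if $V$ is a singleton $\{p\}$, the factor $\kappa(V)=\kappa_1(a_p)=\varphi(a_p)$ vanishes; if $|V|\ge2$, then $V$ contains a consecutive pair $p,p+1$ with $i(p)\ne i(p+1)$, so $V$ is not constant and $\kappa(V)[a_1,\dots,a_n]=0$ by hypothesis $(2)$. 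In either case $\kappa_\pi=0$, so $\varphi(a_1\cdots a_n)=0$.

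For $(1)\Rightarrow(2)$ I first reduce to centred arguments: $\kappa_n$ is multilinear and, by (i), kills scalar arguments, so replacing each $a_j$ by $a_j-\varphi(a_j)1\in\mathcal{A}_{i(j)}$ leaves $\kappa_n(a_1,\dots,a_n)$ unchanged; hence I may assume $\varphi(a_j)=0$ for all $j$. I then argue by strong induction on $n\ge2$ that $\kappa_n(a_1,\dots,a_n)=0$ whenever the $i(j)$ are not all equal. For $n=2$ this is $\kappa_2(a_1,a_2)=\varphi(a_1a_2)-\varphi(a_1)\varphi(a_2)=0$, using $i(1)\ne i(2)$ and freeness. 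For $n\ge3$ I split into cases. If the indices alternate, $i(1)\ne i(2)\ne\cdots\ne i(n)$, I isolate the term $\pi=1_n$ to get $\kappa_n(a_1,\dots,a_n)=\varphi(a_1\cdots a_n)-\sum_{\pi\ne1_n}\kappa_\pi[a_1,\dots,a_n]$; the moment vanishes by freeness, and for each $\pi\ne1_n$ (all of whose blocks have size $<n$) an interval block is either a singleton, giving a factor $\varphi(a_p)=0$, or a non-constant block of size between $2$ and $n-1$, whose cumulant is $0$ by the induction hypothesis --- so every $\kappa_\pi$ vanishes and $\kappa_n(a_1,\dots,a_n)=0$.

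If instead $i(j)=i(j+1)$ for some $j$, I absorb the pair: put $b:=a_ja_{j+1}\in\mathcal{A}_{i(j)}$ and apply the formula expressing a cumulant through cumulants with products as entries (see \cite{nica2006lectures}), which yields $\kappa_{n-1}(a_1,\dots,a_{j-1},b,a_{j+2},\dots,a_n)=\sum_\pi\kappa_\pi[a_1,\dots,a_n]$, where $\pi$ ranges over the partitions joining to $1_n$ with the interval partition whose only non-singleton block is $\{j,j+1\}$ --- i.e.\ over $1_n$ and the two-block non-crossing partitions separating $j$ from $j+1$. A two-block $\pi$ of the latter kind contributes only if, by the induction hypothesis and centredness, both of its blocks are constant of size $\ge2$; but then all indices equal $i(j)$, which is excluded. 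Hence only $\pi=1_n$ survives, so $\kappa_n(a_1,\dots,a_n)=\kappa_{n-1}(a_1,\dots,a_{j-1},b,a_{j+2},\dots,a_n)$. Writing $b=(b-\varphi(b)1)+\varphi(b)1$ and dropping the scalar part via (i) (legitimate since $n-1\ge2$), the right-hand side is $\kappa_{n-1}$ of $n-1$ centred elements of the algebras $\mathcal{A}_{i(\cdot)}$ whose indices are still not all equal (removing the duplicated value $i(j+1)=i(j)$ leaves at least two distinct indices), hence it is $0$ by the induction hypothesis; this closes the induction. I expect the non-alternating case just treated to be the only genuine difficulty: it is where one actually needs the combinatorial identity linking cumulants and cumulants with products as entries, together with the verification that the extra two-block partitions it produces all drop out. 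Everything else reduces to the moment--cumulant formula combined with the interval-block observation.
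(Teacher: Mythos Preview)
Your proof is correct and follows the standard argument (essentially that of \cite{nica2006lectures}, Theorem 11.16 and its surrounding lemmas): the moment--cumulant formula plus the interval-block observation handles $(2)\Rightarrow(1)$, and the induction using the ``cumulants with products as entries'' identity handles $(1)\Rightarrow(2)$. There is, however, nothing to compare against: the paper does not prove Theorem~\ref{VanishingCumulant} but merely records it in the preliminaries section as a known result from free probability, citing \cite{nica2006lectures}. So your write-up supplies a proof where the paper deliberately omits one.
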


\begin{notation}
If $p \in \mathcal{A}$ a projection such that $\varphi(p)\neq 0$, then we can
 consider the compression $(p\mathcal{A}p,\varphi^{p})$, where $\varphi^p$ is the unital functional on $p\mathcal{A}p$ defined by $\varphi^{p}:=\frac{1}{\varphi(p)}\varphi\restriction{p\mathcal{A}p}$.  We will denote the cumulants corresponding to $\varphi^{p}$ by $\kappa^{p}$, whereas
 $\kappa$ refers as usual to the cumulants corresponding to $\varphi$.
\end{notation}

\begin{definition}
Let $\pi,\sigma \in\NC(n)$ be two non-crossing partitions. We write $\pi \leq \sigma$ if each block of $\pi$  is contained in
 one of the blocks of $\sigma$  (that is, if $\pi$ can be obtained out of $\sigma$ by refining
 the block structure). The partial order obtained in this way on $\NC(n)$ is called the reversed refinement order.

\end{definition}
\begin{definition}
The complementation map $K:\NC(n)\longrightarrow\NC(n)$ is defined as follows. Let $N'=\{1',\dots,n'\}$ be a copy of $N=\{1,\dots,n\}$ and consider the totally ordered set $N\cup N':=\{1<1'<\dots n<n'\}$. 
Given $\pi\in\NC(n)$ we denote by $\pi'\in\NC(1',\dots,n')$ its copy. The \textit{Kreweras complement} $K(\pi)\in \NC(n)$ of $\pi$ the
 biggest element among those  $\sigma\in\NC(n)$ such that $\pi\cup\sigma$ is a non-crossing partition of the totally ordered set $N\cup N'$.
\end{definition}
\begin{notation}
Let a partition $\pi \in\NC(n)$ and $\vec{i},\vec{j}\in \N^n$. Then we say that \textit{$\pi$ couples $(\vec{i},\vec{j})$ in a cyclic way} (c.c.w.) if, for each block $\{r_1 < r_2< \dots < r_s\}\in \pi$, we have  $j_{r_k}=i_{r_{k+1}}$ for all $k=1,\dots,s $ (where we put $r_{s+1}:=r_1$). We use the notation $\NC(\vec{i},\vec{j})$ for the set of $\pi\in\NC(n)$ such that $\pi$ c.c.w. $(\vec{i},\vec{j})$.
\end{notation}

\begin{theorem}\label{Thm14.18}
 Let $\{e_{ij}\}_{i,j=1,\dots,d}\subset \mathcal{A}$ be a family of matrix units satisfying $\varphi(e_{ij}) = \delta_{ij} \frac{1}{d}$, for all  $i,j = 1, \dots, d$, and let  $a_1,\dots,a_m\in\mathcal{A}$ be such that $\{a_1, \dots, a_m\}$ is freely independent from $\{e_{ij}\}_{i,j=1, \dots, d} \subset \mathcal{A}$. Define  $a_{ij}^{(r)} := e_{1i} a_r e_{j1}$ and $p := e_{11}$. Then, for all $n \geq 1$, $1\leq r_{1},\dots,r_{n}\leq m $, $1\leq i_1,j_1,\dots i_n,j_n\leq d $ we have:
 \begin{enumerate}
\item The functionals $\varphi_{n}^{p}$ are given by: \begin{eqnarray*}
\varphi_{n}^{p} (a_{i_{1}j_{1}}^{(r_{1})},\dots,a_{i_{n}j_{n}}^{(r_{n})})&=&d\sum_{\pi \in NC(\vec{i}, \vec{j})} \kappa_{\pi}[a_{r_1},\dots,a_{r_n}]\cdot d^{-|K(\pi)|}\\
&=&\sum_{\pi \in NC(\vec{i}, \vec{j})} d^{|\pi|} \cdot\kappa_{\pi}[d^{-1} a_{r_{1}},\dots,d^{-1} a_{r_{n}}].
\end{eqnarray*}
\item The free cumulants $\kappa_{n}^{p}$ are given by:
$$\kappa_{n}^{p}(a_{i_{1}j_{1}}^{(r_{1})},\dots,a_{i_{n}j_{n}}^{(r_{n})})=\left\{\begin{array}{cl}d\kappa_{n}(d^{-1} a_{r_{1}},\dots,d^{-1} a_{r_{n}}) &\text{if } j_{(k)}=i_{(k+1)} \text{ for all } k=1,\dots,n,\\
0&\text{otherwise.}\end{array}\right.$$
\end{enumerate}
 \end{theorem}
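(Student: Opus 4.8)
The plan is to prove Theorem \ref{Thm14.18} as a consequence of the combinatorics of the Kreweras complement applied to the compression by a matrix unit, following closely the exposition in \cite{nica2006lectures} (this is essentially their Theorem 14.18, whence the label). I would start with part (2), since the cumulant formula is the conceptual heart and part (1) will follow from it by the cumulant--moment relation of Theorem \ref{ThmCumulant}. The key structural input is freeness of $\{a_1,\dots,a_m\}$ from the matrix units $\{e_{ij}\}$, encoded via the vanishing-of-mixed-cumulants criterion (Theorem \ref{VanishingCumulant}), together with the fact that $\varphi(e_{ij})=\delta_{ij}/d$ pins down all cumulants of the $e_{ij}$ among themselves.

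First I would compute $\kappa_n^p(a^{(r_1)}_{i_1j_1},\dots,a^{(r_n)}_{i_nj_n})$ by expanding each $a^{(r)}_{ij}=e_{1i}a_re_{j1}$ and writing the compressed cumulant in terms of uncompressed cumulants. The standard device is the formula relating $\kappa^p$ to $\kappa$ under compression by a projection $p=e_{11}$: one has a sum over non-crossing partitions that refine a prescribed ``outer'' block structure coming from where the $e_{11}$'s sit. Plugging in the matrix-unit words and using that mixed cumulants involving both an $a$ and an $e_{ij}$ vanish, the only surviving terms are those where the $e$'s cumulate among themselves. Because all joint cumulants of the $e_{ij}$ are determined (the $e_{ij}$ generate a copy of $M_d(\C)$ with the normalized trace, whose cumulants are classical and computable), the sum collapses: the index-compatibility condition $j_{(k)}=i_{(k+1)}$ for all $k$ is exactly what is needed for the chain of matrix units $e_{1i_1}\cdots$ to telescope, and each surviving configuration contributes a factor that recombines into $d\,\kappa_n(d^{-1}a_{r_1},\dots,d^{-1}a_{r_n})$. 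When the compatibility fails, every term has a mismatched pair $e_{j_k1}e_{1i_{k+1}}$ with $j_k\neq i_{k+1}$ forcing a zero, giving the second case.

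Having established (2), part (1) is then a direct application of the moment--cumulant formula in the compressed algebra: $\varphi^p_n(\cdot)=\sum_{\pi\in\NC(n)}\kappa^p_\pi[\cdot]$, and by the multiplicativity in Theorem \ref{ThmCumulant}(1) together with the formula for $\kappa^p_n$ just proved, $\kappa^p_\pi$ is nonzero only when $\pi$ couples $(\vec i,\vec j)$ in the cyclic way, i.e.\ $\pi\in\NC(\vec i,\vec j)$ — this is precisely the global version of the local condition $j_{(k)}=i_{(k+1)}$, block by block. On such $\pi$ one gets $\kappa^p_\pi[\dots]=\prod_{V\in\pi}|V|\cdot\kappa_{|V|}(d^{-1}a_{\dots})=d^{|\pi|}\kappa_\pi[d^{-1}a_{r_1},\dots,d^{-1}a_{r_n}]$, which is the second line of (1). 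The first line of (1) is then obtained from the identity $|\pi|+|K(\pi)|=n+1$ relating a non-crossing partition to its Kreweras complement: $d^{|\pi|}=d\cdot d^{|\pi|-1}=d\cdot d^{n-|K(\pi)|}$, and pulling the $d^{-n}$ into the cumulant as $d^{-|K(\pi)|}\cdot d^{\,?}$ — more precisely rewriting $d^{|\pi|}\kappa_\pi[d^{-1}a]= d\cdot d^{-|K(\pi)|}\kappa_\pi[a]$ using $\kappa_\pi[d^{-1}a_{r_1},\dots]=d^{-n}\kappa_\pi[a_{r_1},\dots]$ (multilinearity, $n$ arguments) and $|\pi|-1 = n-|K(\pi)|$.

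The main obstacle I expect is bookkeeping the compression formula for cumulants by a matrix-unit projection cleanly enough that the index-telescoping becomes transparent: one has to be careful that the outer block structure induced by the positions of $e_{11}=p$ interacts correctly with the Kreweras complement, and that the combinatorial factors $d^{\pm 1}$ from $\varphi(e_{ij})=\delta_{ij}/d$ are tracked without error across the refinement sum. Once the vanishing of mixed $a$--$e$ cumulants is invoked to kill all but the ``$e$'s cumulate separately'' terms, the rest is the standard identity $|\pi|+|K(\pi)|=n+1$; so the real work is the careful setup rather than any deep new idea, and I would lean on the treatment in \cite{nica2006lectures} for the delicate parts.
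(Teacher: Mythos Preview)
The paper does not give its own proof of this statement: Theorem~\ref{Thm14.18} appears in the preliminaries section as a result quoted from \cite{nica2006lectures} (it is their Theorem~14.18, as you correctly identify), and is used later without further justification. Your sketch is essentially the standard argument from that reference, so there is nothing to compare against.

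One small remark on the order of the argument: in \cite{nica2006lectures} the natural route is to establish (1) first --- expand the product $e_{1i_1}a_{r_1}e_{j_11}e_{1i_2}a_{r_2}\cdots e_{j_n1}$ in the full algebra, apply the moment--cumulant formula of Theorem~\ref{ThmCumulant}(2) there, and use freeness (Theorem~\ref{VanishingCumulant}) together with the known cumulants of the matrix units to see the Kreweras complement appear --- and only then extract (2) by M\"obius inversion on $\NC(n)$. Your proposal to prove (2) first relies on ``the formula relating $\kappa^p$ to $\kappa$ under compression by a projection,'' but that formula is itself typically derived via the moment computation, so your order risks circularity unless you have an independent route to the compressed cumulants. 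Since you explicitly say you would lean on \cite{nica2006lectures} for the delicate parts, this is a matter of exposition rather than a genuine gap.
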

 
 %%%%%%%%%%%%%%%%%%%%%%%%
\section{Universal quantum homomorphisms}
%%%%%%%%%%%%%%%%%%%%%%%%

\noindent In this section we provide a new approach on universal quantum homomorphisms and, in the case of universal quantum homomorphisms from $C^*(\Gamma)$ to $C^*(\Lambda)$, where $\Gamma,\Lambda$ are two discret groups and $\Lambda$ is finite, we give an explicit formula for the canonical state constructed in \cite{FMP24}.

\subsection{Generalites on universal quantum homomorphisms}

Let $A$, $B$ be two unital C*-algebras. Whenever $B$ is finite dimensional there exists a unique, up to a canonical isomorphism, pair $(\rho,\Ucal)$ of a unital C*-algebra $\Ucal$ and a unital $*$-homomorphism $\rho\,:\,A\rightarrow B\ot\Ucal$ such that, for any pair $(\rho',\Ucal')$ of a unital C*-algebra $\Ucal'$ and a unital $*$-homomorphism $\rho'\,:\, A\rightarrow B\ot\Ucal'$ there exists a unique unital $*$-homomorphism $\pi\,:\,\Ucal\rightarrow\Ucal'$ such that $(\id\ot\pi)\rho=\rho'$. Following \cite{FMP24}, the morphism $\rho$ is called \textit{the universal quantum homomorphism from $A$ to $B$}. Both the morphism $\rho$ and the C*-algebra $\Ucal$ are studied in details in \cite{FMP24}.

\medskip

\noindent Write $B=\bigoplus_{\kappa=1}^KM_{N_\kappa}(\C)$, with matrix units $(e^\kappa_{ij})_{\kappa,i,j}$ and $\chi_\kappa\,:\, B\rightarrow M_{N_\kappa}(\C)$ the canonical surjection. Given a pair of states $\omega\in A^*$ and $\mu\in B^*$, a specific state $\widetilde{\omega}\in\Ucal^*$ is constructed in \cite[Section 3.1.3]{FMP24}. It what follows we take the state $\mu=\frac{1}{{\rm dim}(B)}\sum_{\kappa=1}^KN_\kappa{\rm Tr}_\kappa$, where ${\rm Tr}_\kappa$ is the unique trace on $M_{N_\kappa}(\C)$ such that ${\rm Tr}_\kappa(1)=N_\kappa$. Let $(H,\pi,\xi)$ be the GNS construction of $\omega$. We still denote by $\omega:=\langle\cdot \xi,\xi\rangle$ the GNS-faithful state on the C*-algebra $\pi(A)\subset\Lcal(H)$ and we define $A_r:=\pi(A)$ and $A'':=\pi(A)''$. Let $(\widetilde{\pi},\widetilde{H},\widetilde{\xi})$ be the GNS construction of $\widetilde{\omega}$, $\Ucal_r:=\widetilde{\pi}(\Ucal)$ be the reduced C*-algebra given by the GNS construction of $\widetilde{\omega}$ and $\Ucal'':=\Ucal_r''$ the von Neumann algebra generated by $\Ucal_r$ in the GNS representation of $\widetilde{\omega}$. It is shown in \cite{FMP24} that the map $\rho\:\,A\rightarrow B\ot \Ucal$ as a unique faithful normal extension:
$$\rho''\,:\,A''\rightarrow B\ot \Ucal''\text{ satisfying }\rho''(A_r)\subset B\ot\Ucal_r.$$

\begin{lemma}\label{LemFactor}
If $A''$ is a diffuse factor then:
$$\rho''(A'')'\cap(B\ot \Ucal'')\subset\mathcal{Z}(B)\ot 1.$$
\end{lemma}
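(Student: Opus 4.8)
The plan is to argue by contradiction using the ergodicity-type consequences of $A''$ being a diffuse factor, together with the explicit combinatorial structure provided by Theorem \ref{Thm14.18}. First I would recall that $\Ucal$ is the universal C*-algebra receiving a $*$-homomorphism $\rho: A \to B \ot \Ucal$, and that by naturality there is a canonical $*$-homomorphism $\Ucal \to \Ucal$ covering any "inner perturbation'' of $\rho$ coming from unitaries in $B$ (via the adjoint action $\Ad(u \ot 1)$ on $B \ot \Ucal$). Since $B$ is finite dimensional, $\mathcal Z(B)$ is spanned by the central minimal projections $z_\kappa = \chi_\kappa^{-1}(1)$, and $B \ot \Ucal'' = \bigoplus_\kappa M_{N_\kappa}(\C) \ot \Ucal''$. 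So the claim reduces to showing that any $X \in B \ot \Ucal''$ commuting with $\rho''(A'')$ is, in each block $\kappa$, of the form $1_{N_\kappa} \ot (\text{scalar})$; equivalently that the relative commutant of $\rho''(A'')$ inside $M_{N_\kappa}(\C) \ot \Ucal''$ is trivial for each $\kappa$.

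The key step is to realize $\rho''$ concretely via the matrix-unit picture of Theorem \ref{Thm14.18}. Fix a block $\kappa$ and write $d = N_\kappa$. The composition $\chi_\kappa \circ \rho : A \to M_d(\C) \ot \Ucal$ produces, from the generators of $A$, elements of the form $a^{(r)}_{ij}$ as in that theorem — more precisely, inside $B \ot \Ucal$ one has a copy of the $d\times d$ matrix units $\{e^\kappa_{ij} \ot 1\}$ which, with respect to the state $\mu \ot \widetilde\omega$, satisfies the normalization $\varphi(e^\kappa_{ij}\ot 1) = \delta_{ij}/d$, and this matrix-unit system is free (in the GNS von Neumann algebra with respect to the dual state) from the copy of $A$ sitting inside $\Ucal$ via the "diagonal'' corner. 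Under the compression by $p = e^\kappa_{11} \ot 1$, Theorem \ref{Thm14.18}(2) shows the corner $p(B\ot\Ucal'')p$ is generated by a copy of $A''$ freely complemented by the matrix units, so that $B \ot \Ucal''$ in block $\kappa$ is identified with $M_d(\C) \ot A'' * (\text{something})$ in a way where $\rho''(A'')$ lands as $A'' \ot 1$ times the matrix-unit twisting. Concretely, $\rho''(a)$ in block $\kappa$ has the form $\sum_{i,j} e^\kappa_{ij} \ot a^{(\cdot)}_{ij}$ and the free-cumulant vanishing of Theorem \ref{VanishingCumulant}, combined with Theorem \ref{Thm14.18}(2), forces any element commuting with all such $\rho''(a)$ to have vanishing mixed free cumulants with the generators of $A''$; since $A''$ is a factor, this pins the commutant down to scalars on that block.

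More carefully, I would proceed as follows. Let $X \in \rho''(A'')' \cap (B\ot\Ucal'')$ and decompose $X = \sum_\kappa X_\kappa$ with $X_\kappa \in M_{N_\kappa}(\C)\ot\Ucal''$. Compress by $p_\kappa = e^\kappa_{11}\ot 1$ and work inside the corner $(B\ot\Ucal'')^{p_\kappa}$ with the state $\varphi^{p_\kappa}$; by Theorem \ref{Thm14.18} this corner contains a copy $\Atilde$ of $A''$ (the image of $\pi(A)''$ under $a \mapsto p_\kappa \rho''(a) p_\kappa$ suitably interpreted) which is free from the matrix units $\{e^\kappa_{ij}\}$. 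The condition $[X,\rho''(a)]=0$ for all $a$, translated through the matrix-unit decomposition, says precisely that the "matrix of corner elements'' of $X_\kappa$ commutes with $\Atilde$ entrywise and is $\Atilde$-central in the appropriate sense; since $\Atilde \simeq A''$ is a diffuse factor, its relative commutant in the free product with a matrix algebra is trivial by the freeness characterization (vanishing of mixed cumulants: any element of $\Atilde' \cap (\Atilde * M_d)$ has all free cumulants with elements of $\Atilde$ equal to those of a scalar, hence equals a scalar by Theorem \ref{ThmCumulant}(2) and faithfulness of $\widetilde\omega$). Unwinding the compression, $X_\kappa = 1_{N_\kappa} \ot \lambda_\kappa$ for a scalar $\lambda_\kappa$, which is exactly $X \in \mathcal Z(B)\ot 1$.

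The main obstacle I anticipate is the bookkeeping of the identification in Theorem \ref{Thm14.18}: one must verify that the matrix units $e^\kappa_{ij}\ot 1 \in B\ot\Ucal''$ really are free from the relevant copy of $A''$ with respect to the dual state $\mu\ot\widetilde\omega = \widetilde{(\omega)}$, which is where the specific choice $\mu = \frac{1}{\dim B}\sum_\kappa N_\kappa \Tr_\kappa$ (making $\varphi(e^\kappa_{ij}\ot1)=\delta_{ij}/N_\kappa$) and the construction of $\widetilde\omega$ in \cite{FMP24} enter; this freeness is presumably the content of how $\widetilde\omega$ is built and is the crux that makes Theorem \ref{Thm14.18} applicable. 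Once freeness is in hand, the factoriality-implies-trivial-relative-commutant step is a standard free-probability argument via vanishing of mixed free cumulants, and the reduction to blocks and the conclusion $\rho''(A'')'\cap(B\ot\Ucal'') \subset \mathcal Z(B)\ot 1$ follow directly.
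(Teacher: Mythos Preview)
Your overall architecture matches the paper's: reduce to a single matrix block $M_{N_\kappa}(\C)$, identify $M_{N_\kappa}(\C)\ot\Ucal_\kappa''$ with the von Neumann free product $M_{N_\kappa}(\C)*A''$ (with $\rho''$ becoming the inclusion of $A''$), and then argue that the relative commutant of the diffuse factor $A''$ inside this free product is trivial. The paper does exactly this, obtaining the identification from the description of $\Ucal$ in \cite{FMP24} (essentially Proposition~\ref{PropUQH}) rather than from Theorem~\ref{Thm14.18}, and then handles $K\geq 2$ by noting $\Ucal''=\underset{\kappa}{*}\,\Ucal_\kappa''$.

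The genuine gap is your proposed mechanism for the relative-commutant step. You write that an element of $A''{}'\cap(A''*M_d)$ ``has all free cumulants with elements of $A''$ equal to those of a scalar, hence equals a scalar by Theorem~\ref{ThmCumulant}(2) and faithfulness''. This does not follow. Vanishing of mixed cumulants (Theorem~\ref{VanishingCumulant}) characterizes \emph{freeness}, not commutation; an element commuting with $A''$ has no reason to have prescribed mixed cumulants with $A''$, and Theorem~\ref{Thm14.18} only computes cumulants of the compressed generators, it says nothing about arbitrary elements of the commutant. The statement that a diffuse subalgebra $A''\subset A''*M_d$ has relative commutant contained in $A''$ is a genuine structural theorem about free-product von Neumann algebras, not elementary cumulant combinatorics. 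The paper invokes it as a black box: Ioana--Peterson--Popa \cite[Theorem~1.1]{MR2386109} gives $A''{}'\cap(M_{N}(\C)*A'')\subset A''$ when $A''$ is diffuse, and then factoriality of $A''$ forces the commutant into $\C 1$ on each block, hence into $\mathcal{Z}(B)\ot 1$ globally. You should replace the cumulant sketch with this citation; once you do, your argument and the paper's coincide.
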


\begin{proof}
We use freely the results from \cite{FMP24}. Assume $K=1$ so $B=M_N(\C)$ and we may identify $M_N(\C)\ot\Ucal=M_N(\C)*A$, $\rho$ is then identify with the inclusion map from $A$ to $M_N(\C)*A$. The state $\widetilde{\omega}$ is then defined as the restriction to $\Ucal$ of the free product state ${\rm tr}*\omega$. Then we identify $M_N(\C)\ot\Ucal''$ with the von Neumann algebraic free product $M_N(\C)\ot\Ucal''=M_N(\C)*A''$ w.r.t. the faithful normal states ${\rm tr}$ and $\omega$. The map $\rho''$ is identify with the inclusion map of $A''$ in the free product. If $A''$ is diffuse we can apply Ioana-Peterson-Popa's result \cite[Theorem 1.1]{MR2386109} to deduce that $A'\cap(M_N(\C)*A'')\subset A''$ and with our identifications we deduce that $\rho''(A'')'\cap (M_N(\C)\ot\Ucal'')\subset \rho''(A'')$. Assume now that $K\geq 2$. Define $\rho_\kappa:=(\chi_\kappa\ot\id)\rho\,:\, A\rightarrow M_{N_\kappa}\ot\Ucal$ and let $\Ucal_\kappa\subset\Ucal$ be the C*-algebra generated by the coefficients of $\rho_\kappa$.  Then $\Ucal=\underset{\kappa}{*}\Ucal_k$ and $\rho_\kappa$ is the universal quantum homomorphism from $A$ to $M_{N_\kappa}(\C)$. The state $\widetilde{\omega}$ is defined as the free product state $\omega=*_\kappa\widetilde{\omega}_\kappa$, where $\widetilde{\omega}_\kappa$ is defined as in case $K=1$. In particular, $\widetilde{\omega}\vert_{\Ucal_\kappa}=\widetilde{\omega}_\kappa$. Hence, we deduce from the case $K=1$ that $\rho''_\kappa(A'')'\cap (M_{N_\kappa}(\C)\ot\Ucal_\kappa'')\subset \rho_\kappa''(A'')$, for all $1\leq\kappa\leq K$ and where $\rho_\kappa''=(\chi_\kappa\ot\id)\rho''$. Moreover, by definition of the state $\widetilde{\omega}$, we have that $\Ucal''$ is the von Neumann algebraic free product $\Ucal''=\underset{1\leq\kappa\leq K}{*}\left(\Ucal_\kappa'',\widetilde{\omega}_\kappa\right)$. Let now $x\in \rho''(A'')'\cap(B\ot\Ucal'')$. Then, for all $1\leq\kappa\leq K$ one has $(\chi_\kappa\ot\id)(x)\in\rho_\kappa(A'')'\cap(M_{N_\kappa}(\C)\ot\Ucal'')$. From case $K=1$:
$$\rho_\kappa(A'')'\cap(M_{N_\kappa}(\C)\ot\Ucal'')\subset  \rho_\kappa(A'')'\cap(M_{N_\kappa}(\C)\ot\Ucal_\kappa'')\subset \rho_\kappa''(A'')\text{ for all }1\leq\kappa\leq K.$$
Which implies that $\rho_\kappa(A'')'\cap(M_{N_\kappa}(\C)\ot\Ucal'')\subset\rho_\kappa''(\mathcal{Z}(A''))$, for all $1\leq\kappa\leq K$. Assuming that $A''$ is moreover a factor, we conclude that  $(\chi_\kappa\ot\id)(x)\in\C1_\kappa\ot 1$, for all $1\leq\kappa\leq K$. It implies that $x\in\mathcal{Z}(B)\ot 1$ and concludes the proof.\end{proof}

\noindent We will use an explicit description, different from the one given in \cite{FMP24}, of the universal quantum homomorphism from $A$ to $B$. This description is given in the next Proposition.

\begin{proposition}[Compare with Proposition 2.5 \cite{FMP24}]\label{PropUQH}
The universal quantum homomorphism $\rho\,:\, A\rightarrow B\ot \Ucal$ is given by
$$\Ucal=\underset{1\leq \kappa\leq K}{*}e^\kappa_{11}(A*M_{N_\kappa}(\C))e^\kappa_{11}\text{ and } \rho(a)=\sum_{\kappa,i,j}e^{\kappa}_{ij}\ot \nu_\kappa(e^\kappa_{1i}ae^\kappa_{j1}),$$
where $\nu_\kappa\,:\,e^\kappa_{11}(A*M_{N_\kappa}(\C))e^\kappa_{11}\rightarrow \Ucal$ is the canonical inclusion. Under this identification, the state $\widetilde{\omega}$ on $\Ucal$ is given by the free product state $\underset{1_\leq k\leq K}{*}\varphi_k^{e^k_{11}}$, where $\varphi_k:=\omega*{\rm tr}_k\in (A* M_{N_k}(\C))^*$, ${\rm tr}_k$ is the normalized trace on $M_{N_k}(\C)$.
\end{proposition}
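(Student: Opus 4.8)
The plan is to reduce everything to the case $K=1$ and then glue along the free product; since the universal quantum homomorphism is determined up to canonical isomorphism, it suffices to check that the pair $(\rho,\Ucal)$ of the statement has the required universal property (and then to identify the state). Concretely, for $K=1$ I would write $B=M_N(\C)$, drop the index $\kappa$, set $D:=A*M_N(\C)$ for the full (universal) free product of C*-algebras, $p:=e_{11}\in M_N(\C)\subset D$, and $\Ucal:=pDp$. Then the formula $\rho(a)=\sum_{i,j}e_{ij}\ot e_{1i}ae_{j1}$ is just the canonical embedding $A\hookrightarrow D$ followed by the standard corner isomorphism $\theta\,:\,D\xrightarrow{\sim}M_N(\C)\ot pDp$, $\theta(d)=\sum_{i,j}e_{ij}\ot e_{1i}de_{j1}$ (an isomorphism because $\sum_i e_{ii}=1_D$); hence $\rho$ is automatically a unital $*$-homomorphism. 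I would also note that the coefficients $\{e_{1i}ae_{j1}:a\in A,\ 1\leq i,j\leq N\}$ generate $\Ucal=pDp$ as a C*-algebra: expanding the $M_N(\C)$-letters of a reduced word of $D$ in matrix units, compressing by $p$, and inserting $p=\sum_k e_{k1}e_{1k}$ between consecutive $A$-letters realises any such compressed word as a product of these coefficients.

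The core is then the universal property. Given a unital $*$-homomorphism $\rho'\,:\,A\rightarrow M_N(\C)\ot\Ucal'$, write $\rho'(a)=\sum_{i,j}e_{ij}\ot\rho'_{ij}(a)$. I would invoke the universal property of the free product $D=A*M_N(\C)$ on $\rho'$ (on $A$) and on $b\mapsto b\ot 1$ (on $M_N(\C)$) to produce a unital $*$-homomorphism $\Phi\,:\,D\rightarrow M_N(\C)\ot\Ucal'$ extending both. Since $\Phi(p)=e_{11}\ot 1$, one has $\Phi(\Ucal)=\Phi(pDp)\subset(e_{11}\ot 1)(M_N(\C)\ot\Ucal')(e_{11}\ot 1)=\C e_{11}\ot\Ucal'$, so there is a unique unital $*$-homomorphism $\pi\,:\,\Ucal\rightarrow\Ucal'$ with $\Phi(x)=e_{11}\ot\pi(x)$. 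The one-line computation $\Phi(e_{1i}ae_{j1})=(e_{1i}\ot 1)\rho'(a)(e_{j1}\ot 1)=e_{11}\ot\rho'_{ij}(a)$ gives $\pi(e_{1i}ae_{j1})=\rho'_{ij}(a)$, hence $(\id\ot\pi)\rho=\rho'$; uniqueness of $\pi$ is immediate since $\pi$ is forced on the generators $e_{1i}ae_{j1}$. This shows $(\rho,\Ucal)$ is the universal quantum homomorphism $A\to M_N(\C)$.

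For general $K$, I would take the maps $\rho_\kappa\,:\,A\rightarrow M_{N_\kappa}(\C)\ot\Ucal_\kappa$ just built, $\Ucal_\kappa=e^\kappa_{11}(A*M_{N_\kappa}(\C))e^\kappa_{11}$, set $\Ucal:=\underset{\kappa}{*}\Ucal_\kappa$ and let $\rho$ be the direct sum $\bigoplus_\kappa(\id\ot\nu_\kappa)\rho_\kappa$ under $B\ot\Ucal=\bigoplus_\kappa M_{N_\kappa}(\C)\ot\Ucal$ --- this is exactly the formula of the statement. Given $\rho'\,:\,A\rightarrow B\ot\Ucal'$, the universal property of each $\rho_\kappa$ applied to $\rho'_\kappa:=(\chi_\kappa\ot\id)\rho'$ yields $\pi_\kappa\,:\,\Ucal_\kappa\rightarrow\Ucal'$, and the universal property of the full free product $\Ucal=\underset{\kappa}{*}\Ucal_\kappa$ glues these to $\pi\,:\,\Ucal\rightarrow\Ucal'$ with $(\id\ot\pi)\rho=\rho'$; uniqueness again follows because the $\Ucal_\kappa$ are generated by the coefficients of $\rho_\kappa$, which are among those of $\rho$. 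By uniqueness of the universal quantum homomorphism, this $(\rho,\Ucal)$ is canonically identified with the one of \cite{FMP24}.

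It remains to match the state. By the construction in \cite{FMP24} (recalled in the proof of Lemma \ref{LemFactor}), $\widetilde{\omega}$ is the free product state $\underset{\kappa}{*}\widetilde{\omega}_\kappa$, where for $K=1$ the state $\widetilde{\omega}$ is the one induced on $\Ucal=pDp$ by compressing the free product state $\omega*\tr$ on $D$ by $p=e_{11}$. Under the identification above this is precisely $(\omega*\tr_\kappa)^{e^\kappa_{11}}=\varphi_\kappa^{e^\kappa_{11}}$, giving $\widetilde{\omega}=\underset{\kappa}{*}\varphi_\kappa^{e^\kappa_{11}}$. I expect the only genuine obstacle to be this last step --- carefully reconciling the corner picture above with the abstract algebra $\Ucal$ and, especially, the state $\widetilde{\omega}$ as they are defined in \cite{FMP24}; the other ingredients (the corner isomorphism $\theta$, the generation statement, and the two applications of a free-product universal property) are routine.
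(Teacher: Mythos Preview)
Your argument for the universal property is essentially the paper's: reduce to $K=1$, invoke the universal property of the full free product $D=A*M_N(\C)$ to produce $\Phi\,:\,D\to M_N(\C)\ot\Ucal'$ with $\Phi\vert_A=\rho'$ and $\Phi\vert_{M_N(\C)}=\iota\ot 1$, compress by $e_{11}$, and read off $\pi$. The paper cites \cite[Lemma 2.4(1)]{FMP24} for the reduction to $K=1$ rather than spelling out the free-product gluing you perform, but the content is identical.

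You are right that the only non-routine step is matching the state, and here your sketch stops short of what the paper actually does. In the description from \cite[Proposition 2.5]{FMP24} (recalled in the proof of Lemma \ref{LemFactor}), for $K=1$ the algebra $\Ucal$ is realised as the relative commutant $M_N(\C)'\cap(A*M_N(\C))$, generated by the elements $\nu_{ij}(a)=\sum_r e_{ri}ae_{jr}$, and $\widetilde{\omega}$ is the \emph{restriction} of $\varphi=\omega*\tr$ to that commutant --- not, a priori, the compression to the corner. The isomorphism from the commutant to the corner $e_{11}(A*M_N(\C))e_{11}$ is $x\mapsto e_{11}xe_{11}$, so what must be checked is precisely $\varphi^{e_{11}}(e_{11}xe_{11})=\varphi(x)$ for $x$ in the commutant. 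The paper supplies this by a short explicit computation: for $x=\nu_{i_1j_1}(a_1)\cdots\nu_{i_nj_n}(a_n)$, one expands the outer sums $\sum_r e_{ri_1}(\cdots)e_{j_nr}$, uses traciality of $\varphi$ to cycle $e_{j_nr}$ to the front, and obtains $\varphi(x)=N\,\varphi(e_{11}xe_{11})=\varphi^{e_{11}}(e_{11}xe_{11})$. This trace computation is the missing ingredient in your proposal; once added, your argument is complete and coincides with the paper's.
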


\begin{proof}
It is easy to check that the map $\rho$ defined above is a unital $*$-homomorphism. Let us show that it satisfies the universal property. Note that, by \cite[Lemma 2.4 (1)]{FMP24} we may and will assume that $K=1$ so $B=M_N(\C)$. Let $\rho'\,:\,A\rightarrow M_N(\C)\ot C$ be a unital $*$-homomorphism. By the universal property of full free product there exists a unique unital $*$-homomorphism $\pi_0\,:\,A* M_N(\C)\rightarrow M_N(\C)\ot C$ such that $\pi_0\vert_A=\rho'$ and $\pi_0(b)=b\ot 1$ for all $b\in M_N(\C)$. Note that $\pi_0(e_{11}(A*M_N(\C))e_{11})\subseteq(e_{11}\ot 1)(M_N(\C)\ot C)(e_{11}\ot 1)=\C e_{11}\ot C$. Hence, there exists a unique unital $*$-homomorphism $\pi\,:\,\Ucal=e_{11}(A*M_N(\C))e_{11}\rightarrow C$ such that $\pi_0(x)=e_{11}\ot\pi(x)$, for all $x\in\Ucal$. Write $\rho'(a)=\sum_{ij}e_{ij}\ot \rho'_{ij}(a)$, where $\rho'_{ij}(a)\in C$. Note that, for all $a\in A$, $1\leq i,j\leq N$ one has:
$$\pi_0(e_{1i}ae_{j1})=(e_{1i}\ot 1)\rho'(a)(e_{j1}\ot 1)=e_{11}\ot \rho'_{ij}(a).$$
Hence, $\pi(e_{1i}ae_{j1})=\rho'_{ij}(a)$ and it follows that:
$$(\id\ot\pi)\rho(a)=\sum_{ij}e_{ij}\ot\pi(e_{1i}ae_{j1})=\rho'(a).
$$
It remains to show the statement concerning the state $\widetilde{\omega}$. Since $\widetilde{\omega}$ is constructed as a free product (see \cite{FMP24} or the proof of Lemma \ref{LemFactor}) we may also assume that $K=1$. Then, by \cite[Proposition 2.5]{FMP24}, we have the identification $\Ucal=M_N(\C)'\cap(A*M_N(\C))$ which is the unital C*-subalgebra of $A*M_N(\C)$ generated by
$$\{\nu_{ij}(a):=\sum_re_{ri}ae_{jr}:a\in A,1\leq i,j\leq n\}$$ and $\widetilde{\omega}$ is the restriction of the free product state $\varphi=\omega*{\rm tr}$ to $\Ucal$. Our isomorphism
$$M_N(\C)'\cap(A*M_N(\C))\rightarrow e_{11}(A* M_N(\C))e_{11}$$
is then given by $x\mapsto e_{11}xe_{11}$. Hence, it suffices to show that $\varphi^{e_{11}}(e_{11}xe_{11})=\varphi(x)$ for all $x\in M_N(\C)'\cap(A*M_N(\C))$ and by the discussion above, it suffices to show the equality for $x=\nu_{i_1jj_1}(a_1)\dots \nu_{i_nj_n}(a_n)$. One has:

\begin{eqnarray*}
\varphi(x)&=&\sum_{i,j\in\{1,\dots,N\}^n}\sum_{r=1}^N\varphi(e_{ri_1}a_1e_{j_1i_2}a_2\dots e_{j_{n-1}i_n}a_ne_{j_nr})=\sum_{i,j,r=1}^N\varphi(e_{ri}X_{i,j}e_{jr}),
\end{eqnarray*}
where $X_{i,j}:=\sum_{i_2,\dots i_n,j_1,\dots,j_{n-1}} a_1e_{j_1i_2}a_2\dots e_{j_{n-1}i_n}a_n$. Since $\varphi$ is tracial, we have:
\begin{eqnarray*}
\varphi(x)&=&\sum_{i,j,r=1}^N\varphi(e_{jr}e_{ri}X_{i,j})=N\sum_{i,j=1}^N\varphi(e_{ji}X_{i,j})=N\sum_{i,j=1}^N\varphi(e_{1i}X_{i,j}e_{j1})=N\varphi(e_{11}xe_{11})\\
&=&\varphi^{e_{11}}(e_{11}xe_{11}).
\end{eqnarray*}
It finishes the proof.
\end{proof}

%%%%%%%%%%%%%%%%%%%%%%%%
\subsection{An explicit formula for the canonical state}\label{SectionUnivGroups}
%%%%%%%%%%%%%%%%%%%%%%%%

Beside the construction of the state $\widetilde{\omega}$ on $\Ucal$ and the study of the C*-algebra and von Neumann algebra given by the GNS construction of this state,  \cite{FMP24} contains no explicit formula for $\widetilde{\omega}$. Our goal here is to produce such a formula in the case of the universal quantum homomorphism from $C^*(\Gamma)$ to $C^*(\Lambda)$, where $\Gamma$ and $\Lambda$ are two discrete groups and $\Lambda$ is finite and where $\widetilde{\omega}\in\Ucal^*$ is the state associated to the canonical traces on both $C^*(\Gamma)$ and $C^*(\Lambda)$.

\medskip

\noindent Let $\rho\,:\, C^*(\Gamma)\rightarrow C^*(\Lambda)\ot \Ucal$ be the universal quantum homomorphism from $C^*(\Gamma)$ to $C^*(\Lambda)$ and write $\rho(g)=\sum_{\gamma\in\Lambda}\gamma\ot\rho_\gamma(g)$, where $\rho_\gamma(g)\in \Ucal$. Given $n\geq 1$, $\vec{\gamma}=(\gamma_1,\dots,\gamma_n)\in\Lambda^n$ and $\vec{g}=(g_1,\dots g_n)\in \Gamma^n$ define $$\rho_{\vec{\gamma}}(\vec{g}):=\rho_{\gamma_1}(g_1)\dots\rho_{\gamma_n}(g_n)\in C(\GG).$$
 We denote by $\NC(\vec{\gamma})$ the set of non-crossing partitions $\pi\in\NC(n)$ such that, for each block $V=\{r_1<\dots<r_s\}\in\pi$ one has $\gamma_{r_1}\dots\gamma_{r_s}=1$. We use the same notation $\NC(\vec{g})$ for $\vec{g}\in\Gamma^n$ and we also write $\vec{g}\vert_V:=(g_{r_1},\dots, g_{r_s})$. Define, for $\pi\in\NC(\vec{g})$ and $V\in\pi$, $\mu_V(\vec{g}\vert_V):=\sum_{\sigma \in \NC(\vec{g}_{\restriction V})}\mu(\sigma,1_{|V|})$ and $\mu_\pi(\vec{g}):=\prod_{V\in \pi }\mu_{V}(\vec{g}\vert_V)$.

\begin{theorem}\label{ThmUnivState}
The state $\widetilde{\omega}$ on $\Ucal$ is the unique state on $\Ucal$ such that,
  $$\widetilde{\omega}(\rho_{\vec{\gamma}}({\vec{g}}))=\sum_{\pi \in \NC(\vec{g})\cap \NC(\vec{\gamma})} \frac{\mu_{\pi}(\vec{g})}{|\Lambda|^{n-|\pi|}}.\quad\text{for all }n\geq 1, \vec{\gamma}\in\Lambda^n, g\in\Gamma^n,$$
  Moreover, $\widetilde{\omega}$ is a trace.
\end{theorem}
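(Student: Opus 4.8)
The plan is to use the explicit model of the universal quantum homomorphism from Proposition \ref{PropUQH}, specialized to $A = C^*(\Gamma)$ and $B = C^*(\Lambda)$. Since $\Lambda$ is finite, $C^*(\Lambda) = \bigoplus_\kappa M_{N_\kappa}(\C)$ decomposes into matrix blocks indexed by $\Irr(\Lambda)$, and by the Proposition $\Ucal$ is a free product over $\kappa$ of corners $e^\kappa_{11}(C^*(\Gamma) * M_{N_\kappa}(\C))e^\kappa_{11}$, with state the free product of the compressed states $\varphi_\kappa^{e^\kappa_{11}}$, where $\varphi_\kappa = \tau_\Gamma * \tr_{N_\kappa}$. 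The first step is therefore to reduce the computation of $\widetilde\omega(\rho_{\vec\gamma}(\vec g))$ to a moment computation in a single free product $C^*(\Gamma) * M_N(\C)$ with respect to $\tau_\Gamma * \tr$: this is where Theorem \ref{Thm14.18} enters, since the generators $\rho_\gamma(g)$ are exactly of the form $e_{1i}\,\lambda_g\,e_{j1}$ after passing through the Fourier/regular-representation identification $C^*(\Lambda) \cong \bigoplus M_{N_\kappa}$, and the matrix units $e^\kappa_{ij}$ are free from $C^*(\Gamma)$ inside the free product and satisfy $\varphi(e^\kappa_{ij}) = \delta_{ij}/N_\kappa$.

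The second step is to translate Theorem \ref{Thm14.18}(1) into the stated combinatorial formula. Applying that theorem gives $\widetilde\omega(\rho_{\vec\gamma}(\vec g))$ as a sum over $\pi \in \NC(\vec i, \vec j)$ of $d^{|\pi|}\kappa_\pi[d^{-1}\lambda_{g_1},\dots,d^{-1}\lambda_{g_n}]$ inside $(C^*(\Gamma), \tau_\Gamma)$, where the index vectors $\vec i, \vec j$ encode $\vec\gamma$. Here I would first handle the group-von-Neumann-algebra cumulants: the free cumulants of $\tau_\Gamma$ on the group elements $\lambda_{g_1},\dots,\lambda_{g_n}$ vanish unless $g_{r_1}\cdots g_{r_s} = 1$ on each block (this is the standard fact that $\kappa_n$ on a family of Haar-unitary-like group elements detects exactly when the product is trivial, obtained by Möbius inversion of the moment formula $\tau_\Gamma(\lambda_{g_{r_1}}\cdots) = \delta_{g_{r_1}\cdots = 1}$), and on such a block $\kappa_V(\lambda_{g}|_V)$ equals the quantity $\mu_V(\vec g|_V) = \sum_{\sigma \in \NC(\vec g|_V)} \mu(\sigma, 1_{|V|})$ by definition. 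This is what forces the partition to lie in $\NC(\vec g)$ and produces the factor $\mu_\pi(\vec g)$. Simultaneously, the combinatorial data $\NC(\vec i,\vec j)$ coming from the $\Lambda$-side must be identified with $\NC(\vec\gamma)$: tracking how the Fourier decomposition of $C^*(\Lambda)$ assigns to each $\gamma \in \Lambda$ the block operator $\bigoplus_\kappa \pi_\kappa(\gamma)$ and using the orthogonality relations for irreducible characters of $\Lambda$, a block of $\pi$ survives the sum over the remaining matrix-unit indices precisely when $\gamma_{r_1}\cdots\gamma_{r_s} = 1$, and the resulting scalar weight assembles to $|\Lambda|^{-(n-|\pi|)}$ after summing over $\kappa$ and using $\sum_\kappa N_\kappa^2 = |\Lambda|$ together with $d^{|\pi|} d^{-|K(\pi)|} = d^{|\pi| - |K(\pi)|}$ and $|\pi| + |K(\pi)| = n+1$. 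Combining the two sides gives exactly $\sum_{\pi \in \NC(\vec g)\cap\NC(\vec\gamma)} \mu_\pi(\vec g)/|\Lambda|^{n-|\pi|}$.

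The third step is uniqueness and traciality. Uniqueness is immediate once one knows the elements $\rho_{\vec\gamma}(\vec g)$ (together with $1$) span a dense subalgebra of $\Ucal$ — which follows from Proposition \ref{PropUQH} since the $\rho_\gamma(g)$ generate $\Ucal$ as a C*-algebra and products of them reduce, via the defining relations, to linear combinations of such monomials; so a state is determined by its values on them. For traciality, the clean route is structural rather than combinatorial: $\varphi_\kappa = \tau_\Gamma * \tr_{N_\kappa}$ is a trace on $C^*(\Gamma)*M_{N_\kappa}(\C)$ as a free product of traces, hence its compression $\varphi_\kappa^{e^\kappa_{11}}$ to a corner is a trace, hence the free product state $\widetilde\omega = *_\kappa \varphi_\kappa^{e^\kappa_{11}}$ is a trace as a free product of traces. (Alternatively one checks $\widetilde\omega(xy) = \widetilde\omega(yx)$ directly from the symmetry of the formula under cyclic rotation, but the structural argument is shorter and was already implicitly used in Proposition \ref{PropUQH}.)

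The main obstacle I anticipate is the bookkeeping in the second step: correctly setting up the dictionary between $\Lambda$ acting on $C^*(\Lambda) = \bigoplus_\kappa M_{N_\kappa}$ and the matrix-unit indices $(\vec i, \vec j)$, and then carrying out the sum over irreducible blocks $\kappa$ and over the free indices so that the numerous powers of $N_\kappa$, the Kreweras-complement factor $d^{-|K(\pi)|}$, and the character orthogonality relations all collapse to the single uniform weight $|\Lambda|^{-(n-|\pi|)}$ and to the constraint $\pi \in \NC(\vec\gamma)$. Getting the normalization of $\mu$ in $\mu=\frac{1}{\dim B}\sum_\kappa N_\kappa \Tr_\kappa$ to match $\tr$ on each block, and verifying that it is indeed the state used to build $\widetilde\omega$ in \cite{FMP24}, is the kind of routine-but-error-prone check that will need care.
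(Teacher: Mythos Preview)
Your proposal is correct and follows essentially the same approach as the paper: both use the explicit model from Proposition \ref{PropUQH}, expand $\rho_\gamma(g)$ via the Fourier decomposition of $C^*(\Lambda)$ into matrix coefficients $\rho^u_{ij}(g)=e^u_{1i}g^ue^u_{j1}$, apply Theorem \ref{VanishingCumulant} to restrict to partitions constant in $u$ on each block, then apply Theorem \ref{Thm14.18} to compute the compressed cumulants in terms of the $\tau_\Gamma$-cumulants of group elements, and finally use character orthogonality (the regular character identity $\sum_u N_u\chi_u=\vert\Lambda\vert\,\tau_\Lambda$) to collapse the sum over $u$ and the matrix indices to the constraint $\pi\in\NC(\vec\gamma)$ with weight $\vert\Lambda\vert^{-(n-\vert\pi\vert)}$. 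Your structural argument for traciality (free product of compressions of tracial free-product states) is cleaner than what the paper writes, which in fact does not spell out the traciality explicitly.
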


\begin{proof}
The uniqueness being obvious, let us show the existence. Let $\Irr(\Lambda)$ be a complete set of representatives of the equivalence classes of irreducible unitary representations of $\Lambda$ and write $C^*(\Lambda)=\bigoplus_{u\in\Irr(\Lambda)}M_{N_u}(\C)$. Let $(e^u_{ij})$ be matrix units in $M_{N_u}(\C)$. By Proposition \ref{PropUQH}, the universal quantum homomorphism $\rho\,:\, C^*(\Gamma)\rightarrow C^*(\Lambda)\ot \Ucal$ is given by
$$\Ucal=\underset{u\in\Irr(\Lambda)}{*}e^u_{11}(C^*(\Gamma)* M_{N_u}(\C))e^u_{11}\text{ and }\rho(g)=\sum_{u\in\Irr(\Lambda)}\sum_{i,j=1}^{N_u}e^u_{ij}\ot e^u_{1i}g^ue^u_{j1},$$
where $g^u$ is the element $g$ viewed in $C^*(\Gamma)\subset C^*(\Gamma)* M_{N_u}(\C)\subset \underset{u\in\Irr(\Lambda)}{*} C^*(\Gamma)*M_{N_u}(\C)$.

\noindent Moreover, $\widetilde{\omega}\in \Ucal^*$ is defined by the free product state $\underset{u\in\Irr(\Lambda)}{*}\varphi^{p_u}_u$, where $\varphi_u:=\tau_\Gamma*{\rm tr}_u\in (C^*(\Gamma)* M_{N_u}(\C))^*$ is the free product of the the canonical trace $\tau_\Gamma$ on $C^*(\Gamma)$ and the normalized trace on $M_{N_u}(\C)$ and $p_u:=e^u_{11}$ is viewed in $C^*(\Gamma)*M_{N_u}(\C)$.

\noindent Let us show that $\omega$ satisfies the property of the Theorem. We will need the following Claim, which follows from elementary finite group representation theory.

\vspace{0.2cm}

\noindent\textbf{Claim.}\textit{ Let $\rho_{ij}^u(g):=e^u_{1i}g^ue^u_{j1}$ then, with $c^u_{ij}(g):=\frac{N_u}{\vert\Lambda\vert}\langle e^u_i,\gamma e^u_j\rangle$, one has:
$$\rho_\gamma(g)=\sum_{u,i,j}c^u_{ij}(\gamma)\rho^u_{ij}(g).$$}

\noindent\textit{Proof of the Claim.} Writing any $\gamma\in C^*(\Lambda)$ as $\gamma=\sum_{u,i,j}\langle\gamma e_j^u,e^u_i\rangle e^u_{ij}$ we have $$\rho(g)=\sum_\gamma\gamma\ot\rho_\gamma(g)=\sum_{u,i,j}e^u_{ij}\ot\left(\sum_\gamma\langle\gamma e^u_j,e^u_i\rangle\rho_\gamma(g)\right).$$
Hence, $\rho_{ij}^u(g)=\sum_\gamma\langle\gamma e^u_j,e^u_i\rangle\rho_\gamma(g)$. Let $\chi_u$ be the character of $u$. We have:
\begin{eqnarray*}
\sum_{u,i,j}c^u_{ij}(\gamma)\rho^u_{ij}(g)&=&\frac{1}{\vert\Lambda\vert}
\sum_{u,i,j}N_u\langle e^u_i,\gamma e^u_j\rangle\rho^u_{ij}(g)=\frac{1}{\vert\Lambda\vert}\sum_{s,u,i,j}N_u\langle \gamma^{-1} e^u_i,e^u_j\rangle\langle e^u_j,s^{-1}e^u_i\rangle\rho_s(g)
\\
&=&\frac{1}{\vert\Lambda\vert}\sum_{s}\left(\sum_{u,i}N_u\langle \gamma^{-1} e^u_i,s^{-1}e^u_i\rangle\right)\rho_s(g)\\
&=&\sum_{s\in\Lambda}\left(\frac{1}{\vert\Lambda\vert}\sum_{u\in\Irr(\Lambda)}N_u\chi_u(s\gamma^{-1})\right)\rho_s(g)
=\rho_\gamma(g),
\end{eqnarray*}
where we used the relation $\frac{1}{\vert\Lambda\vert}\sum_{u\in\Irr(\Lambda)}N_u\chi_u(\cdot)=\tau_\Lambda$ in the last equality, with $\tau_\Lambda$ being the canonical trace on $C^*(\Lambda)$.$\hfill\qed$

\vspace{0.2cm}

\noindent\textit{End of the proof of Theorem \ref{ThmUnivState}.} For $n\geq 1$, $\vec{u}\in\Irr(\Lambda)^n$ we write $N_{\vec{u}}:=\{1,\dots,N_{u_1}\}\times\dots\times \{1,\dots,N_{u_n}\}$. For $\vec{g}\in\Gamma^n$, $\vec{i}=(i_1,\dots,i_n)\in N_{\vec{u}}$ and $\vec{j}=(j_1,\dots j_n)\in N_{\vec{u}}$ define $\rho_{\vec{i},\vec{j}}^{\vec{u}}(\vec{g}):=\rho^{u_1}_{i_1j_1}(g_1)\dots \rho^{u_n}_{i_nj_n}(g_n)\in A$ and $C_{\vec{i},\vec{j}}^{\vec{u}}(\vec{g}):=c^{u_1}_{i_1j_1}(g_1)\dots c^{u_n}_{i_nj_n}(g_n)\in \C$. By the Claim, the definition of the free cumulants of $\widetilde{\omega}$ and Theorem \ref{ThmCumulant} we have:
$$
\widetilde{\omega}(\rho_{\vec{\gamma}}(\vec{g}))=
\sum_{\vec{u}}\sum_{\vec{i},\vec{j}\in N_{\vec{u}}}C_{\vec{i},\vec{j}}^{\vec{u}}(\vec{\gamma})
\widetilde{\omega}(\rho_{\vec{i},\vec{j}}^{\vec{u}}(\vec{g}))
=\sum_{\vec{u},\vec{i},\vec{j}}C_{\vec{i},\vec{j}}^{\vec{u}}(\vec{\gamma})\sum_{\pi\in\NC(n)}\prod_{V\in\pi}\kappa(V)[\rho^{u_1}_{i_1j_1}(g_1),\dots ,\rho^{u_n}_{i_nj_n}(g_n)]$$
By Theorem \ref{VanishingCumulant}, one has $\kappa(V)[\rho^{u_1}_{i_1j_1}(g_1),\dots ,\rho^{u_n}_{i_nj_n}(g_n)]=0$ whenever $s\mapsto u_s$ is not constant on $V$. Hence, denoting by $\NC(\vec{u})$ the set of non-crossing partition such that $s\mapsto u_s$ is constant, equals to $u_V$, on each block $V\in\pi$, we get:
$$\widetilde{\omega}(\rho_{\vec{\gamma}}(\vec{g}))=\sum_{\vec{u},\vec{i},\vec{j}}C_{\vec{i},\vec{j}}^{\vec{u}}(\vec{\gamma})\sum_{\pi\in\NC(\vec{u})}\prod_{V\in\pi}\kappa(V)[\rho^{u_V}_{i_1j_1}(g_1),\dots ,\rho^{u_V}_{i_nj_n}(g_n)].$$
Fix $\pi\in\NC(\vec{u})$ and, for $V\in\pi$, write $V:=\{v_1<\dots<v_r\}$ and put $u:=u_V$. Since each $\rho^u_{i_{v_s},j_{v_s}}(g_{v_s})$ is in the C*-algebra  $\Ucal_u:=e^u_{11}(C^*(\Gamma)*M_{N_u}(\C))e^u_{11}$ and since $\omega\vert_{\Ucal_u}=\varphi_u^{p_u}$, we deduce that:
$$
X:=\kappa(V)[\rho^{u}_{i_1j_1}(g_1),\dots ,\rho^{u}_{i_nj_n}(g_n)]=\kappa^{\varphi_u^{p_u}}(V)[\rho^{u}_{i_1j_1}(g_1),\dots ,\rho^{u}_{i_nj_n}(g_n)].$$

\noindent Using Theorem \ref{Thm14.18}, we find:
$$
X=
\left\{
\begin{aligned}
&\frac{1}{N^{r-1}_{u}}\kappa^{\varphi_u}_r[g^u_{v_1},\dots,g^u_{v_r}]  && \text{if $(\vec{i},\vec{j})$ cyclic on the block V, }\\
& 0 && \text{otherwise.}\end{aligned}
\right.$$
Since $g^u_{v_1},\dots,g^u_{v_r}\in C^*(\Gamma)\subset \Ucal_u$ and since $\varphi_u\vert_{C^*(\Gamma)}=\tau$, the canonical trace on $C^*(\Gamma)$, we deduce that $\kappa^{\varphi_u}_r[g^u_{v_1},\dots,g^u_{v_r}]=\kappa^\tau_r[g^u_{v_1},\dots,g^u_{v_r}]$. Hence,
$$X=\left\{
\begin{aligned}
&\frac{1}{N_u^{r-1}}\huge\sum_{\sigma \in \NC(r)}\tau_{\sigma}[g^u_{v_1},\dots,g^u_{v_r}]\mu(\sigma,1_{r}) && \text{if $(\vec{i},\vec{j})$ cyclic  on block V,}\\
& 0 && \text{otherwise.}
\end{aligned}
\right.
$$
Note that $\tau_\sigma[g^u_{v_1},\dots g^u_{g_{v_r}}]=1_{\NC(\vec{g}\vert_V)}(\sigma)$, where $1_X$ denotes the characteristic function of a set $X$ and $\vec{g}\vert_V:=(g_{v_1},\dots,g_{v_r})\in\Gamma^r$. 
Moreover, if $g_{v_1}\dots g_{v_r}\neq 1$ then, for all $\sigma\in \NC(r)$ there is at least one block $$W=\{w_1<\dots<w_t\}\in \sigma$$ for which $g_{w_1}\dots g_{w_t}\neq 1$ which implies that $\tau(g_{w_1}\dots g_{w_t})=0$. Hence, if $g_{v_1}\dots g_{v_r}\neq 1$ then $\tau_{\sigma}[g^u_{v_1},\dots,g^u_{v_r}]=0$ for all $\sigma\in\NC(r)$. It follows that:
\begin{eqnarray*}
\widetilde{\omega}(\rho_{\vec{\gamma}}(\vec{g}))&=&\sum_{\vec{u},\vec{i},\vec{j}}C_{\vec{i},\vec{j}}^{\vec{u}}(\vec{\gamma})\sum_{\pi\in\NC(\vec{u})\cap\NC(\vec{g})\cap\NC(\vec{i},\vec{j})}\prod_{V\in\pi}N_{u_V}^{1-\vert V\vert}\mu_V(\vec{g}\vert_V)\\
&=&\sum_{\pi\in\NC(\vec{g})}\sum_{\vec{u}\in\Irr(\Lambda)^n}1_{\NC(\vec{u})}(\pi)\sum_{\vec{i},\vec{j}\in N_{\vec{u}}}1_{\NC(\vec{i},\vec{j})}(\pi)C_{\vec{i},\vec{j}}^{\vec{u}}(\vec{\gamma})\prod_{V\in\pi}N_{u_V}^{1-\vert V\vert}\mu_V(\vec{g}\vert_V)
\end{eqnarray*}
Fix $\vec{u},\vec{i},\vec{j}$ and $\pi\in\NC(\vec{u})$ and note that:
$$C_{\vec{i},\vec{j}}^{\vec{u}}(\vec{\gamma})\prod_{V\in\pi}N_{u_V}^{1-\vert V\vert}\mu_V(\vec{g}\vert_V)=\prod_{V\in\pi}C^{\vec{u}\vert_V}_{\vec{i}\vert_V,\vec{j}\vert_V}(\vec{\gamma}\vert_V)N_{u_V}^{1-\vert V\vert}\mu_V(\vec{g}\vert_V),
$$
where, if a block $V=\{v_1<\dots< v_r\}\in\pi$, $\vec{u}\vert_V:=(u_{v_1},\dots,u_{v_r})$, $
\vec{i}\vert_V:=(i_{v_1}\dots,i_{v_r})$ and $\vec{\gamma}\vert_V:=(\gamma_{v_1},\dots,\gamma_{v_r})$. Denoting $u:=u_V\in\Irr(\Lambda)$ we have:
\begin{eqnarray*}C^{\vec{u}\vert_V}_{\vec{i}\vert_V,\vec{j}\vert_V}(\vec{\gamma}\vert_V)N_{u_V}^{1-\vert V\vert}\mu_V(\vec{g}\vert_V)&=&N_u\vert\Lambda\vert^{-r}\langle e^{u}_{i_{v_1}},\gamma_{v_1}e^u_{j_{v_1}}\rangle\dots \langle e^{u}_{i_{v_r}},\gamma_{v_r}e^u_{j_{v_r}}\rangle \mu_V(\vec{g}\vert_V)\\
&=&\mu_V(\vec{g}\vert_V)\frac{N_u}{\vert\Lambda\vert^r}u_{\vec{i}\vert_V,\vec{j}\vert_V}(\vec{\gamma}\vert_V),
\end{eqnarray*}
Where $u_{ij}(\gamma):=\langle e^u_i,\gamma e^u_j\rangle$ and $u_{\vec{i},\vec{j}}(\vec{\gamma}):=\langle e^u_{i_1},\gamma_1 e^u_{j_1}\rangle\dots \langle e^u_{i_n},\gamma_n e^u_{j_n}\rangle$. Hence,
$$C_{\vec{i},\vec{j}}^{\vec{u}}(\vec{\gamma})\prod_{V\in\pi}N_{u_V}^{1-\vert V\vert}\mu_V(\vec{g}\vert_V)=\frac{\mu_\pi(\vec{g})}{\vert\Lambda\vert^n}\prod_{V\in\pi}N_{u_V}u_{\vec{i}\vert_V,\vec{j}\vert_V}(\vec{\gamma}\vert_V).$$
Fix now $\vec{u}\in\Irr(\Lambda)^n$ and $\pi\in\NC(\vec{u})$. %Write  $E_\pi:=\{(\vec{i},\vec{j})\in N_{\vec{u}}:\pi\in\NC(\vec{i},\vec{j})\}$.
Given $V=\{v_1<\dots<v_r\}\subset\{1,\dots,n\}$ and $\vec{\gamma}\in\Lambda^n$ we write $\vec{\gamma}\vert_V:=(\gamma_{v_1},\dots\gamma_{v_r})\in\Lambda^r$ and $\prod_V\vec{\gamma}\vert_V:=\gamma_{v_1}\dots\gamma_{v_r}$. 
Write $V_1,\dots, V_t$ the blocks of $\pi$, $u_s:=u_{V_s}$, $V_s=\{v^s_1<\dots v^s_{r_s}\}$
\begin{eqnarray*}
\prod_{V\in\pi}\chi_{u_V}(\prod_V\vec{\gamma}\vert_V)
&=&\prod_{s=1}^t\sum_{i\in N_{u_s}}(u_s)_{ii}(\prod_{V_s}\vec{\gamma}\vert_{V_s})\\
&=&\prod_{s=1}^t\sum_{i,k_1,\dots k_{r_s-1}=1}^{N_{u_s}}(u_s)_{ik_1}(\gamma_{v^s_1})(u_s)_{k_1k_2}(\gamma_{v^s_2})\dots (u_s)_{k_{r-1}i}(\gamma_{v^s_{r_s}})\\
&=&\sum_{\vec{i},\vec{j}\in\prod_{s=1}^tN_{u_s},\pi\in\NC(\vec{i},\vec{j})}\prod_{s=1}^t(u_s)_{\vec{i},\vec{j}}(\vec{\gamma}\vert_V)\\
&=&\sum_{(\vec{i},\vec{j})\in N_{\vec{u}}}1_{\NC(\vec{i},\vec{j})}(\pi)\prod_{V\in\pi}(u_V)_{\vec{i}\vert_V,\vec{j}\vert_V}(\vec{\gamma}\vert_V)
\end{eqnarray*}

\noindent In conclusion we have, writing $\chi:=\sum_{u\in\Irr(\Lambda)}N_u\chi_u$ the character of the regular representation,
\begin{eqnarray*}
\widetilde{\omega}(\rho_{\vec{\gamma}}(\vec{g}))&=&\sum_{\pi\in\NC(\vec{g})}\frac{\mu_\pi(\vec{g})}{\vert\Lambda\vert^n}\sum_{\vec{u}}1_{\NC(\vec{u})}(\pi)\prod_{V\in\pi}N_{u_V}\chi_{u_V}(\prod_V\vec{\gamma}\vert_V)\\
&=&\sum_{\pi\in\NC(\vec{g})}\frac{\mu_\pi(\vec{g})}{\vert\Lambda\vert^n}\prod_{V\in\pi}\sum_{u\in\Irr(\Lambda)}N_{u}\chi_{u}(\prod_V\vec{\gamma}\vert_V)\\
&=&\sum_{\pi\in\NC(\vec{g})}\frac{\mu_\pi(\vec{g})}{\vert\Lambda\vert^n}\prod_{V\in\pi}\chi(\prod_V\vec{\gamma}\vert_V)=\sum_{\pi\in\NC(\vec{g})\cap\NC(\vec{\gamma})}\frac{\mu_\pi(\vec{g})}{\vert\Lambda\vert^{n-\vert\pi\vert}}.
\end{eqnarray*}
\end{proof}

%%%%%%%%%%%%%%%%%%%%%%%%
\section{Free wreath product and crossed-product}
%%%%%%%%%%%%%%%%%%%%%%%%

%%%%%%%%%%%%%%%%%%%%%%%%
\subsection{Generalities on compact quantum groups}
%%%%%%%%%%%%%%%%%%%%%%%%

For a discrete group $\Gamma$ we denote by $C^*(\Gamma)$ its full C*-algebra, $C^*_r(\Gamma)$ its reduced C*-algebra and ${\rm L}(\Gamma)$ its von Neumann algebra. We briefly recall below some elements of the compact quantum group (CQG) theory developed by Woronowicz \cite{wor87,wor88,wor98}.

\medskip

\noindent For a CQG $G$, we denote by $C(G)$ its \textit{maximal} C*-algebra, which is the enveloping C*-algebra of the unital $*$-algebra $\Pol(G)$ given by the linear span of coefficients of irreducible unitary representations of $G$. The set of equivalence classes of irreducible unitary representations will be denoted by $\Irr(G)$. We will denote by $\varepsilon_G\,:\, C(G)\rightarrow\C$ the counit of $G$ which satisfies ($\id\ot\varepsilon_G)(u)={\rm \id}_H$ for all finite dimensional unitary representations $u\in\mathcal{B}(H)\ot C(G)$.

\medskip

\noindent We denote by $h\in C(G)^*$ the Haar state of $G$. It is the unique state on $C(G)$ such that $(h\ot\id)\Delta=h(\cdot)1$. It is also the unique state such that $(\id\ot h)\Delta=h(\cdot)1$.

%%%%%%%%%%%%%%%%%%%%%%%%
\subsection{A generalized free wreath product}
%%%%%%%%%%%%%%%%%%%%%%%%

\noindent We recall bellow the main object of study, the free wreath product of classical groups, as defined in \cite[Section 3.3.4]{FT25}.

\begin{definition}
Let $\Lambda$ be a finite group and $\Gamma$ a discrete group. Consider the $\widehat{\Lambda}$-action on the finite dimensional C*-algebra $B:=C^*(\Lambda)$ by left translation i.e. $\beta\,:\, C^*(\Lambda)\rightarrow C^*(\Lambda)\ot C^*(\Lambda)$, $\beta(\gamma)=\gamma\ot\gamma$ for all $\gamma\in\Lambda$ and note that $\beta$ is $\tau$-preserving, where $\tau$ is the canonical trace of $C^*(\Lambda)$ (i.e. the Haar state of $\widehat{\Lambda}$). Consider $C(\GG)$ the universal unital C*-algebra generated by elements $\nu_\gamma(g)$ for $\gamma\in\Lambda$ and $g\in \Gamma$ and by $C^*(\Lambda)$ with relations,
\begin{itemize}
     \item $(\nu_\gamma(g))^*=\nu_{\gamma^{-1}}(g^{-1})$ and $\nu_\gamma(1)=\delta_{\gamma,1}1$ for all $g\in\Gamma$ and $\gamma\in\Lambda$.
    \item $\nu_\gamma(gh)=\sum_{r,s\in\Lambda,\,rs=\gamma}\nu_r(g)\nu_s(h)$, for all $g,h\in\Gamma$, $\gamma\in\Lambda$,
    \item $s\nu_{rs}(g)=\nu_{sr}(g)s$, for all $g\in\Gamma$, $r,s\in\Lambda$,
    \end{itemize}
\end{definition}

\noindent By the universal property, there exists a unique unital $*$-morphism $\Delta\,:\, C(\GG)\rightarrow C(\GG)\ot C(\GG)$ such that:
$$\Delta(\gamma)=\gamma\ot\gamma\quad\text{and}\quad\Delta(\nu_\gamma(g))=\sum_{r,s\in\Lambda,\,rs=\gamma} \nu_r(g)s\ot\nu_s(g)\quad\text{ for all }\gamma\in\Lambda,\,g\in\Gamma.$$
It is shown in \cite{FT25} that $(C(\GG),\Delta)$ is a compact quantum group.

\medskip

\noindent We now give a new description of the C*-algebra $C(\GG)$ in terms of crossed-product. Following Section \ref{SectionUnivGroups}, let $\Ucal$ be the universal unital C*-algebra generated by the coefficients of a unital $*$-homomorphism $\rho\,:\,C^*(\Gamma)\rightarrow C^*(\Lambda)\ot \Ucal$. Note that, for each $r\in\Lambda$, the universal property of $\Ucal$, applied the to unital $*$-homomorphism $\rho^{(r)}\,:\,C^*(\Gamma)\rightarrow C^*(\Lambda)\otimes \Ucal$ defined by $\rho^{(r)}=({\rm Ad}(r)\ot \id)\rho$, where ${\rm Ad}(r)\,:\,C^*(\Lambda)\rightarrow C^*(\Lambda)$ is defined by ${\rm Ad}(r)(s)=r^{-1}sr$, gives a unique unital $*$-homomorphism $\alpha_r\,:\, \Ucal\rightarrow \Ucal$ such that $$(\id\ot\alpha_r)\rho=\rho^{(r)}\quad\text{for all }r\in\Lambda.$$
In the sequel, we write, for all $a\in C^*(\Gamma)$, $\rho(a)=\sum_{\gamma\in\Lambda}\gamma\ot\rho_\gamma(a)$, where $\rho_\gamma(a)\in \Ucal$.

\begin{proposition}\label{PropCrossed}
The map $r\mapsto\alpha_r$ is an action $\Lambda$ on $\Ucal$ by automorphisms and there is a unique isomorphism $\psi\,:\, \Ucal\rtimes_\alpha\Lambda\rightarrow C(\GG)$ such that $$\psi(r)=r\text{ for all }r\in \Lambda\text{ and }\psi(\rho_\gamma(g))=\nu_\gamma(g)\text{ for all }\gamma\in\Lambda,\,g\in\Gamma,$$
where $\Ucal\rtimes_\alpha\Lambda$ denotes the full crossed product, generated by $\Ucal$ and $C^*(\Lambda)$ with the relations $rar^{-1}=\alpha_r(a)$ for all $a\in \Ucal$ and $r\in\Lambda$.
\end{proposition}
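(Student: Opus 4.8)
The plan is to verify the two assertions of the Proposition separately: first that $r\mapsto\alpha_r$ is a genuine action of $\Lambda$ by $*$-automorphisms of $\Ucal$, and then that the universal properties of the full crossed product and of $C(\GG)$ match up via the stated formulas. For the first point I would note that $\rho^{(1)}=\rho$ since ${\rm Ad}(1)={\rm id}$, so $\alpha_1={\rm id}$ by the uniqueness part of the universal property of $\Ucal$; and that for $r,s\in\Lambda$ we have $({\rm id}\ot(\alpha_r\alpha_s))\rho=({\rm id}\ot\alpha_r)\rho^{(s)}=({\rm Ad}(s)\ot\id)({\rm id}\ot\alpha_r)\rho=({\rm Ad}(s)\ot\id)({\rm Ad}(r)\ot\id)\rho=({\rm Ad}(rs)\ot\id)\rho=\rho^{(rs)}$ (using that ${\rm Ad}(r)\,{\rm Ad}(s)={\rm Ad}(rs)$ as maps on $C^*(\Lambda)$ and that $({\rm id}\ot\alpha_r)$ commutes with $({\rm Ad}(s)\ot\id)$ since they act on different legs), hence again by uniqueness $\alpha_r\alpha_s=\alpha_{rs}$. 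In particular $\alpha_r$ is invertible with inverse $\alpha_{r^{-1}}$, so each $\alpha_r$ is a $*$-automorphism. On generators, applying ${\rm id}\ot\alpha_r$ to $\rho(g)=\sum_\gamma\gamma\ot\rho_\gamma(g)$ and comparing with $\rho^{(r)}(g)=\sum_\gamma r^{-1}\gamma r\ot\rho_\gamma(g)=\sum_\gamma\gamma\ot\rho_{r\gamma r^{-1}}(g)$ gives the key formula $\alpha_r(\rho_\gamma(g))=\rho_{r\gamma r^{-1}}(g)$.

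Next I would construct the isomorphism. For the map $\Ucal\rtimes_\alpha\Lambda\to C(\GG)$: by the universal property of the full crossed product it suffices to produce a unital $*$-homomorphism $\Phi_0\,:\,\Ucal\to C(\GG)$ and a group homomorphism $\Lambda\to\mathcal U(C(\GG))$, $r\mapsto r$, satisfying the covariance relation $r\,\Phi_0(a)\,r^{-1}=\Phi_0(\alpha_r(a))$. To get $\Phi_0$, consider the unital $*$-homomorphism $C^*(\Gamma)\to C^*(\Lambda)\ot C(\GG)$, $g\mapsto\sum_{\gamma\in\Lambda}\gamma\ot\nu_\gamma(g)$; one checks this is a well-defined $*$-homomorphism using the defining relations of $C(\GG)$ (the relation $(\nu_\gamma(g))^*=\nu_{\gamma^{-1}}(g^{-1})$ and $\nu_\gamma(1)=\delta_{\gamma,1}1$ give unitality and $*$-preservation, and $\nu_\gamma(gh)=\sum_{rs=\gamma}\nu_r(g)\nu_s(h)$ gives multiplicativity), so the universal property of $\Ucal$ yields a unique $\Phi_0\,:\,\Ucal\to C(\GG)$ with $\Phi_0(\rho_\gamma(g))=\nu_\gamma(g)$. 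The covariance relation $r\,\nu_\gamma(g)\,r^{-1}=\nu_{r\gamma r^{-1}}(g)$ is exactly the third defining relation $s\nu_{rs}(g)=\nu_{sr}(g)s$ of $C(\GG)$ rewritten (put $r\gamma$ in place of $rs$), matching $\alpha_r(\rho_\gamma(g))=\rho_{r\gamma r^{-1}}(g)$, so $\Phi_0$ and $r\mapsto r$ assemble into $\psi\,:\,\Ucal\rtimes_\alpha\Lambda\to C(\GG)$ with the claimed values.

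For the inverse, I would use the universal property of $C(\GG)$: define $\widetilde\nu_\gamma(g):=\rho_\gamma(g)\in\Ucal\subset\Ucal\rtimes_\alpha\Lambda$ and send $C^*(\Lambda)\ni r\mapsto u_r$ the canonical unitaries of the crossed product. I must check these satisfy the three defining relations of $C(\GG)$. The first two are immediate from the analogous properties of the $\rho_\gamma$'s, which follow from the $*$-homomorphism $\rho$ applied to $g^*=g^{-1}$, $g\mapsto 1$, and $gh$ respectively (expanding $\rho(gh)=\rho(g)\rho(h)$ in $C^*(\Lambda)\ot\Ucal$ and collecting coefficients of $\gamma$ gives $\rho_\gamma(gh)=\sum_{rs=\gamma}\rho_r(g)\rho_s(h)$). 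The third relation $u_s\,\rho_{rs}(g)\,u_s^{-1}=\rho_{sr}(g)$ is again the covariance relation of the crossed product combined with $\alpha_s(\rho_\gamma(g))=\rho_{s\gamma s^{-1}}(g)$ applied with $\gamma=rs$. So the universal property of $C(\GG)$ gives a unital $*$-homomorphism $\varphi\,:\,C(\GG)\to\Ucal\rtimes_\alpha\Lambda$ with $\varphi(r)=u_r$ and $\varphi(\nu_\gamma(g))=\rho_\gamma(g)$. Finally $\varphi\circ\psi$ and $\psi\circ\varphi$ are identities because they fix the respective generating sets (the $u_r$/$r$ and the $\rho_\gamma(g)$/$\nu_\gamma(g)$), and uniqueness of $\psi$ with the stated values is clear since those elements generate $\Ucal\rtimes_\alpha\Lambda$. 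The only genuinely delicate verification — and the step I would flag as the main obstacle — is confirming that the map $g\mapsto\sum_\gamma\gamma\ot\nu_\gamma(g)$ is a well-defined $*$-homomorphism out of the \emph{full} C*-algebra $C^*(\Gamma)$ (equivalently, that the elements $\sum_\gamma\gamma\ot\nu_\gamma(g)$ are unitaries in $C^*(\Lambda)\ot C(\GG)$ and multiply correctly), together with the parallel bookkeeping that $\Ucal$ really is generated by the $\rho_\gamma(g)$ so that $\Phi_0$ is uniquely determined; everything else is a routine matching of universal properties.
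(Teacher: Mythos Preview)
Your proposal is correct and follows essentially the same route as the paper: verify $\alpha$ is an action via uniqueness in the universal property of $\Ucal$, build $\psi$ from a covariant pair (using that $g\mapsto\sum_\gamma\gamma\ot\nu_\gamma(g)$ is a unitary representation of $\Gamma$, hence extends to $C^*(\Gamma)$), and build the inverse by checking that the $\rho_\gamma(g)$ and the $u_r$ satisfy the defining relations of $C(\GG)$. One tiny slip: in your parenthetical you write ``${\rm Ad}(r)\,{\rm Ad}(s)={\rm Ad}(rs)$'', but with the convention ${\rm Ad}(r)(\gamma)=r^{-1}\gamma r$ (which your computation of $\rho^{(r)}$ uses) the correct identity is ${\rm Ad}(s)\circ{\rm Ad}(r)={\rm Ad}(rs)$; your displayed chain actually applies ${\rm Ad}(s)$ after ${\rm Ad}(r)$, so the computation itself is fine and only the label is reversed.
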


\begin{proof}
Since ${\rm Ad}(1)=\id$, it follows that $\rho^{(1)}=\rho$ hence, from the uniqueness of $\alpha_1$ that $\alpha_1=\id_\Ucal$. Now, since ${\rm Ad}(rs)={\rm Ad}(s)\circ{\rm Ad}(r)$ we have
$$(\id\ot\alpha_{rs})\rho=\rho^{(rs)}=({\rm Ad}(s)\ot\id)\rho^{(r)}=({\rm Ad}(s)\ot\alpha_r)\rho=(\id\ot\alpha_r)\rho^{(s)}=(\id\ot\alpha_r\alpha_s)\rho.$$
Again, uniqueness of $\alpha_{rs}$ implies that $\alpha_{rs}=\alpha_r\circ\alpha_s$. Hence, $r\mapsto\alpha_r$ is an action of $\Lambda$ on $\Ucal$ by automorphisms.

\medskip

\noindent Let us now show the existence of $\psi$. We note that, the first two relations defining $C(\GG)$ imply that $\nu\,:\,\Gamma\rightarrow C^*(\Lambda)\ot C(\GG)$, $g\in\Gamma\mapsto\sum_{\gamma\in\Lambda}\gamma\ot\nu_\gamma(g)$ is a unitary representation of $\Gamma$, hence extends to a unital $*$-homomorphism $\nu\,:\, C^*(\Gamma)\rightarrow C^*(\Lambda)\ot C(\GG)$. By the universal property of $\Ucal$, there is a unique unital $*$-homomorphism $\psi_0\,:\,A\rightarrow C(\GG)$ such that $(\id\ot\psi_0)\rho=\nu$ i.e. $\psi_0(\rho_\gamma(g))=\nu_\gamma(g)$ for all $\gamma\in\Lambda$, $g\in\Gamma$. Note that, for all $g\in\Gamma$ and $r\in\Lambda$,
\begin{eqnarray*}
(\id\ot\psi_0\circ\alpha_r)\rho(g)&=&(\id\ot\psi_0)\rho(g)=({\rm Ad}(r)\ot\psi_0)\rho(g)=({\rm Ad}(r)\ot\id)\nu(g)\\
&=&\sum_{\gamma\in\Lambda}r\gamma r^{-1}\ot\nu_\gamma(g)=\sum_{\gamma\in\Lambda}\gamma \ot\nu_{r\gamma r^{-1}}(g)=\sum_{\gamma\in\Lambda}\gamma \ot r\nu_{\gamma}(g)r,
\end{eqnarray*}
where we used the last relation defining $C(\GG)$ in the last equality above. We deduce that $\psi_0\circ\alpha_r(a)=r\psi_0(a)r^{-1}$, for all $a\in A$ and $r\in\Lambda$. The existence of $\psi$ now follows from the universal property of the full crossed product.

\medskip

\noindent Let us now construct the inverse of $\psi$. Since $\rho\,:\, C^*(\Gamma)\rightarrow C^*(\Lambda)\ot \Ucal$ is a unital $*$-homomorphism, the elements $\rho_\gamma(g)\in \Ucal\subset \Ucal\rtimes_\alpha\Lambda$ do satisfy the same relations as the elements $\nu_\gamma(g)$. Moreover, the last relation $s\rho_{rs}(g)=\rho_{sr}(g)s$ follows from the relations in the crossed product since, by construction, we have $\alpha_r(\rho_{\gamma}(g))=\rho_{r\gamma r^{-1}}(g)$. By the universal property of the full crossed product, we get the inverse of $\psi$.\end{proof}

%%%%%%%%%%%%%%%%%%%%%%%%
\section{The Haar state}
%%%%%%%%%%%%%%%%%%%%%%%%

 \noindent While the Haar state on a generalized free wreath product has been computed in \cite{FT25}, we give below a more explicit and combinatorial formula in the case of a free wreath product of classical groups.
 
 \medskip
 
 \noindent Given $n\geq 1$, $\vec{\gamma}=(\gamma_1,\dots,\gamma_n)\in\Lambda^n$ and $\vec{g}=(g_1,\dots g_n)\in \Gamma^n$ define $$\nu_{\vec{\gamma}}(\vec{g}):=\nu_{\gamma_1}(g_1)\dots\nu_{\gamma_n}(g_n)\in C(\GG).$$
 We denote by $\NC(\vec{\gamma})$ the set of non-crossing partitions $\pi\in\NC(n)$ such that, for each block $\{r_1<\dots<r_s\}\in\pi$ one has $\gamma_{r_1}\dots\gamma_{r_s}=1$.
 
 \begin{theorem}\label{ThmHaar}
 The Haar state $h\in C(\GG)^*$ is the unique state such that, for all $n\geq 1$, $\vec{\gamma}\in\Lambda^n$, $g\in\Gamma^n$ with $g_i\neq 1$ for all $i$, and all $s\in\Lambda$, one has:
  $$h(\nu_{\vec{\gamma}}({\vec{g}})s)=\delta_{s,1}\sum_{\pi \in \NC(\vec{g})\cap \NC(\vec{\gamma})} \frac{\mu_{\pi}(\vec{g})}{|\Lambda|^{n-|\pi|}}.$$

 \end{theorem}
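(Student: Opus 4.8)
The plan is to transport the problem to the crossed-product description of Proposition~\ref{PropCrossed} and then feed in the explicit formula for the canonical state from Theorem~\ref{ThmUnivState}. Through the isomorphism $\psi\,:\,\Ucal\rtimes_\alpha\Lambda\to C(\GG)$ with $\psi(\rho_\gamma(g))=\nu_\gamma(g)$ and $\psi(r)=r$, let $E\,:\,\Ucal\rtimes_\alpha\Lambda\to\Ucal$ be the canonical conditional expectation (which exists on the full crossed product, e.g.\ as the quotient onto the reduced crossed product followed by the reduced conditional expectation), so that $E(au_s)=\delta_{s,1}a$ for $a\in\Ucal$ and $s\in\Lambda$, and set $\phi:=\widetilde{\omega}\circ E\circ\psi^{-1}\in C(\GG)^*$ where $\widetilde{\omega}$ is the state of Theorem~\ref{ThmUnivState}. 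Since each $\rho_{\vec{\gamma}}(\vec{g})$ lies in $\Ucal$, we get immediately
\[
\phi(\nu_{\vec{\gamma}}(\vec{g})s)=\delta_{s,1}\,\widetilde{\omega}(\rho_{\vec{\gamma}}(\vec{g}))=\delta_{s,1}\sum_{\pi\in\NC(\vec{g})\cap\NC(\vec{\gamma})}\frac{\mu_\pi(\vec{g})}{|\Lambda|^{n-|\pi|}},
\]
which is precisely the asserted formula. The theorem thus reduces to the identification $\phi=h$ and to the remark that this formula determines the state: using $\nu_\gamma(1)=\delta_{\gamma,1}1$ and the commutation relation $s\nu_\gamma(g)=\nu_{s\gamma s^{-1}}(g)s$, every element of $\Pol(\GG)$ lies in the linear span of the elements $\nu_{\vec{\gamma}}(\vec{g})s$ with all $g_i\neq1$, and for any fixed $g\neq1$ the factorization relation gives $\sum_{\gamma\in\Lambda}\nu_\gamma(g)\nu_{\gamma^{-1}}(g^{-1})=\nu_1(1)=1$, so the group elements $s\in\Lambda$ lie in that span as well; hence a state obeying the displayed identity for all $n\geq1$ is uniquely determined on $\Pol(\GG)$, and so on $C(\GG)$.

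To prove $\phi=h$ it suffices to check left invariance, $(\phi\ot\id)\Delta(x)=\phi(x)1$ for all $x\in\Pol(\GG)$, since a left invariant state on a compact quantum group coincides with the Haar state; by the spanning remark it is enough to take $x=\nu_{\vec{\gamma}}(\vec{g})t$ with $g_i\neq1$. Expanding $\Delta(x)=\prod_{k=1}^{n}\bigl(\sum_{r_ks_k=\gamma_k}\nu_{r_k}(g_k)s_k\ot\nu_{s_k}(g_k)\bigr)(t\ot t)$ and pushing all $\Lambda$-letters of the left leg to the right via the commutation relation, one finds, with $\sigma_j:=s_1\cdots s_j$ (and $\sigma_0:=1$) and $\widetilde{r}_j:=\sigma_{j-1}\gamma_j\sigma_j^{-1}$, that the left leg equals $\nu_{\widetilde{r}_1}(g_1)\cdots\nu_{\widetilde{r}_n}(g_n)\,\sigma_n t$; here one uses the telescoping identity $\widetilde{r}_k\widetilde{r}_{k+1}\cdots\widetilde{r}_l=\sigma_{k-1}(\gamma_k\gamma_{k+1}\cdots\gamma_l)\sigma_l^{-1}$ for consecutive indices. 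Applying $\phi$ to the left leg and invoking Theorem~\ref{ThmUnivState} gives $\delta_{\sigma_n t,1}\sum_{\pi\in\NC(\vec{g})\cap\NC(\vec{\widetilde{r}})}\mu_\pi(\vec{g})\,|\Lambda|^{-(n-|\pi|)}$, where $\vec{\widetilde{r}}:=(\widetilde{r}_1,\dots,\widetilde{r}_n)$ depends on $\vec{s}$, whence
\[
(\phi\ot\id)\Delta(x)=\sum_{\pi\in\NC(\vec{g})}\frac{\mu_\pi(\vec{g})}{|\Lambda|^{n-|\pi|}}\ \sum_{\substack{\vec{s}\in\Lambda^n,\ \sigma_n t=1\\ \pi\in\NC(\vec{\widetilde{r}})}}\nu_{s_1}(g_1)\cdots\nu_{s_n}(g_n)\,t .
\]
Everything then hinges on the claim that, for each fixed $\pi\in\NC(\vec{g})$, the inner sum equals $\delta_{t,1}\,1_{\NC(\vec{\gamma})}(\pi)\,1$; granting it, the right-hand side collapses to $\delta_{t,1}\sum_{\pi\in\NC(\vec{g})\cap\NC(\vec{\gamma})}\mu_\pi(\vec{g})\,|\Lambda|^{-(n-|\pi|)}\,1=\phi(x)1$, which is what we want.

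The claim is established by peeling off the blocks of $\pi$ innermost-first. An innermost block $V=\{k,\dots,k+l-1\}$ is necessarily an interval; since $\pi\in\NC(\vec{g})$ we have $g_k\cdots g_{k+l-1}=1$, and by the telescoping identity the constraint contributed by $V$ reads $s_k\cdots s_{k+l-1}=\gamma_V$ with $\gamma_V:=\gamma_k\cdots\gamma_{k+l-1}$, while all remaining constraints, and the factors $\sigma_j$ for $j\geq k+l-1$, depend on $s_k,\dots,s_{k+l-1}$ only through that product. Summing the corresponding consecutive subword over those variables and using the factorization relation,
\[
\sum_{s_k\cdots s_{k+l-1}=\gamma_V}\nu_{s_k}(g_k)\cdots\nu_{s_{k+l-1}}(g_{k+l-1})=\nu_{\gamma_V}(g_k\cdots g_{k+l-1})=\nu_{\gamma_V}(1)=\delta_{\gamma_V,1}1 ,
\]
so the sum vanishes unless $\gamma_V=1$; and when $\gamma_V=1$ one substitutes $1$ for $s_k\cdots s_{k+l-1}$ throughout (equivalently, deletes the positions in $V$) and is left with the analogous sum attached to $\pi\setminus V$, which still lies in $\NC$ of the correspondingly restricted tuples. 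Induction on the number of blocks, together with the bookkeeping identity $1_{\NC(\vec{\gamma})}(\pi)=\delta_{\gamma_V,1}\cdot 1_{\NC(\vec{\gamma}')}(\pi\setminus V)$ (where $\vec{\gamma}'$ is $\vec{\gamma}$ with the entries in $V$ removed) and the fact that once all blocks have been removed the surviving constraint $\sigma_n t=1$ forces $t=1$, yields the claim. The delicate and main point is precisely this collapse: because $\Lambda$ need not be abelian one must verify that after removing an innermost block the leftover data is genuinely of the same shape with one block fewer, and it is the telescoping identity for $\widetilde{r}_k\cdots\widetilde{r}_l$ over consecutive indices — which in turn rests on innermost blocks being intervals, i.e.\ on non-crossingness — that makes this go through; the crossed-product identification, the evaluation of $\phi$ through $E$, and the reduction of uniqueness to a spanning family are all routine. (One could alternatively quote the Haar-state description for a generalized free wreath product from \cite{FT25}, specialized to this setting, and combine it with Theorem~\ref{ThmUnivState} directly, bypassing the invariance computation.)
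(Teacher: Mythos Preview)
Your proof is correct and follows essentially the same route as the paper: define the candidate state as the dual of $\widetilde{\omega}$ through the crossed-product identification of Proposition~\ref{PropCrossed}, verify left invariance of this state on the spanning family $\nu_{\vec{\gamma}}(\vec{g})t$ via the telescoping rewriting $\widetilde{r}_j=\sigma_{j-1}\gamma_j\sigma_j^{-1}$ of the first tensor leg of $\Delta$, and then reduce the inner sum to $1_{\NC(\vec{\gamma})}(\pi)\,1$ by induction on the number of blocks, peeling off an innermost (consecutive) interval block and using $\nu_{\gamma_V}(1)=\delta_{\gamma_V,1}$. The paper carries out exactly this induction (its ``Claim'' is your peeling step, with the three cases $W\subset V_-$, $W\subset V_+$, $W$ straddling spelled out explicitly), so the two arguments coincide in substance; your presentation additionally makes the uniqueness/spanning remark explicit, which the paper leaves implicit.
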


\begin{proof} Identifying, by Proposition \ref{PropCrossed}, $C(\GG)$ with the crossed product $\Ucal\rtimes\Lambda$, where $\rho_\gamma(g)\in \Ucal$ is identify with $\nu_\gamma(g)\in C(\GG)$, it suffices to show, by Theorem \ref{ThmUnivState}, that the dual state $h$ of $\widetilde{\omega}\in \Ucal^*$, in the crossed product sense, is $\Delta$-invariant (either left or right).

\medskip

\noindent For $\vec{\gamma}=(\gamma_1,\dots,\gamma_n),\vec{s}=(s_1,\dots,s_n)\in\Lambda^n$ define:
$$\vec{\gamma}(\vec   {s}):=(d_1,\dots,d_n):=(\gamma_1s_1^{-1},s_1\gamma_2(s_1s_2)^{-1},\dots,(s_1\dots s_{n-1})\gamma_n(s_1\dots s_{n})^{-1}),$$
so that we have $d_1\dots d_n=\gamma_1\dots\gamma_n(s_1\dots s_n)^{-1}$. Note that, by the relations in $C(\GG)$ and the definition of $\Delta$ one has:
$$\Delta(\nu_{\vec{\gamma}}(\vec{g}))=\sum_{\vec{s}\in\Lambda^n}\nu_{\vec{\gamma}(\vec{s})}(\vec{g})s_1\dots s_n\ot\nu_{\vec{s}}(\vec{g}).$$
It follows that $
(h\ot\id)\Delta(\nu_{\vec{\gamma}}(\vec{g}))=\sum_{\vec{s}\in I_n}\widetilde{\omega}(\nu_{\vec{\gamma}(\vec{s})}(\vec{g}))\nu_{\vec{s}}(\vec{g})$, 
where $I_n:=\{\vec{s}\in\Lambda^n:s_1s_2\dots s_n=e \}$.

\medskip

\noindent By Theorem \ref{ThmUnivState} one has:
$$(h\ot\id)\Delta(\nu_{\vec{\gamma}}(\vec{g}))=\sum_{\vec{s}\in I_n}\sum_{\pi \in \NC(\vec{g})\cap\NC(\vec{\gamma}(\vec{s}))} \frac{\mu_{\pi}(\vec{g})}{|\Lambda|^{n-|\pi|}}\nu_{\vec{s}}(\vec{g}).$$
Note that if $\gamma_1\dots\gamma_n\neq 1$ then $\NC(\vec{\gamma})=\emptyset$ and, for all $\vec{s}\in I_n$, and $\NC(\vec{\gamma}(\vec{s}))=\emptyset$. It follows that $(h\ot\id)(\nu_{\vec{\gamma}}(\vec{g}))=0=h(\nu_{\vec{\gamma}}(\vec{g}))$ and we may and will assume that $\gamma_1\dots\gamma_n=1$. Also, $(h\ot\id)(\nu_{\vec{\gamma}}(\vec{g}))=0=h(\nu_{\vec{\gamma}}(\vec{g}))$ as soon as $g_1\dots g_n\neq 1$ and we may and will assume that $g_1\dots g_n=1$ as well. For $V\in\pi$, write $\pi\setminus V:=\cup_{W\in\pi,W\neq V} W\subset\{1,\dots,n\}$. We will view $\pi\setminus\{V\}$ as a non-crossing partition on $\pi\setminus V$.

\vspace{0.2cm}

\noindent\textbf{Claim.} \textit{Let $\vec{g}\in\Gamma^n$ and $\pi\in\NC(\vec{g})$ with a consecutive block $V\in\pi$, $\vert V\vert\geq 2$ i.e. $V=\{k+1,\dots,k+m\}$ for some $k\geq 0$, $m\geq 2$ and $k+m\leq n$. Then, $\forall\vec{\gamma}\in I_n$,
$$\sum_{\vec{s}\in I_n}1_{\NC(\vec{\gamma}(\vec{s}))}(\pi)\nu_{\vec{s}}(\vec{g})=\left\{
\begin{array}{ll}\sum_{\vec{s}\in I_{n-m}}1_{\NC(\vec{\gamma}\vert_{\pi\setminus V}(\vec{s}))}(\pi\setminus \{V\})\nu_{\vec{s}}(\vec{g}\vert_{\pi\setminus V})&\text{if }\prod_V\vec{\gamma}\vert_V=1,\\
0&\text{otherwise.}\end{array}\right.$$
with the obvious conventions when $k=0$ or $k+m=n$.}

\vspace{0.2cm}

\noindent\textit{Proof of the Claim.} Write $\{1,\dots,n\}=V_{-}\sqcup V\sqcup V_+$ where $V_-:=\{1,\dots,k\}$ and $V_+:=\{k+m+1,\dots,n\}$ so that $\pi\setminus V=V_-\sqcup V_+$. For $\vec{s}=(s_1,\dots,s_n)\in I_n$, write $\vec{\gamma}(\vec{s})=(d_1,\dots,d_n)$, where $d_t=(\prod_{i=1}^{t-1}s_i)\gamma_t(\prod_{i=1}^ts_i)^{-1}$. Then, $$\prod_V\vec{\gamma}(\vec{s})\vert_V=d_{k+1}\dots d_{k+m}=(\prod_{i=1}^ks_i)(\prod_V\vec{\gamma}\vert_V)(\prod_{i=1}^{k+m}s_i)^{-1}.$$ Hence, if $\pi\in{\rm NC}(\vec{\gamma}(\vec{s}))$ then, $\prod_V\vec{\gamma}\vert_V=\prod_V\vec{s}\vert_V$. Let $W\in\pi$, $W\neq V$. Write $W=\{w_1<\dots<w_r\}$. If $W\subset V_-$ then, $w_r\leq k$ and we have $\vec{\gamma}(\vec{s})\vert_W=\vec{\gamma}\vert_{\pi\setminus V}(\vec{s}\vert_{\pi\setminus V})\vert_W$. If $W\subset V_+$ then $w_1\geq k+m+1$ and we have, with $a:=s_{k+1}\dots s_{k+m}$,
\begin{eqnarray*}
\prod_W\vec{\gamma}(\vec{s})\vert_W&=&d_{w_1}\dots d_{w_r}=(\prod_{i=1}^{w_1-1}s_i)\gamma_{w_1}(\prod_{i=1}^{w_1}s_i)^{-1}\dots(\prod_{i=1}^{w_r-1}s_i)\gamma_{w_r}(\prod_{i=1}^{w_r}s_i)^{-1}\\
&=&a(\prod_{i=k+m+1}^{w_1-1}s_i)\gamma_{w_1}(\prod_{i=w_1+1}^{w_2-1}s_i)\gamma_{w_2}\dots(\prod_{i=w_{r-1}+1}^{w_r-1}s_i)\gamma_{w_r}(\prod_{i=k+m+1}^ns_i)^{-1}a^{-1}\\
&=&a\prod_W\vec{\gamma}\vert_{\pi\setminus V}(\vec{s}\vert_{\pi\setminus V})\vert_Wa^{-1}.
\end{eqnarray*}
Finally, if both $W\cap V_-\neq\emptyset$ and $W\cap V_{+}\neq\emptyset$ then there exists $1\leq t\leq r$ such that $w_t\leq k$ and $w_{t+1}\geq k+m+1$. In that case we have, using the two previous cases and writing $W_-:=W\cap V_-=\{w_1<\dots <w_t\}$ and $W_+:=W\cap V_+=\{w_{t+1}\dots,w_n\}$ we have:

$$
\prod_W\vec{\gamma}(\vec{s})\vert_W=\prod_{W_-}\vec{\gamma}\vert_{\pi\setminus V}(\vec{s}\vert_{\pi\setminus V})\vert_{W_-}a\prod_{W_+}\vec{\gamma}\vert_{\pi\setminus V}(\vec{s}\vert_{\pi\setminus V})\vert_{W_+}a^{-1}.
$$
It follows that, for any $\vec{s}\in I_n$, writing $\vec{s'}:=(s_1,\dots,s_{k+m},as_{k+m+1},s_{k+m+2},\dots,s_n)$ we have, for any block $W\in\pi$, $W\neq V$ $\prod_W\vec{\gamma}(\vec{s})\vert_W=\prod_W\vec{\gamma}\vert_{\pi\setminus V}(\vec{s'}\vert_{\pi\setminus V})\vert_W$.

\medskip

\noindent Write and element $\vec{x}\in\Lambda^n$ as $\vec{x}=(\vec{s}_-,\vec{s},\vec{s}_+)$, where $\vec{s}_-:=\vec{x}\vert_{V_-}$, $\vec{s}:=\vec{x}\vert_{V}$ and $\vec{s}_+:=\vec{x}\vert_{V_+}$. Let $E_\pi:=\{s\in I_n:\pi\in\NC(\vec{\gamma}(\vec{s}))\}$ and:
$$E_\pi':=\left\{(\vec{s}_-,\vec{s},\vec{s}_+)\in \Lambda_n\,:(\vec{s}_-,\vec{s}_+)\in I_{n-m},\,\pi\setminus\{V\}\in\NC(\vec{\gamma}\vert_{\pi\setminus V}((\vec{s}_-,\vec{s}_+))\text{ and }\prod_V\vec{s}=\prod_V\vec{\gamma}\vert_V\right\}$$
By the discussion above, the map $\psi\,:\, E_\pi\rightarrow E'_\pi$, $\vec{s}\mapsto\vec{s'}$ is a bijection. Hence,
\begin{eqnarray*}
\sum_{\vec{s}\in I_n}1_{\NC(\vec{\gamma}(\vec{s}))}(\pi)\nu_{\vec{s}}(\vec{g})&=&\sum_{(s_-,s_+)\in I_{n-m}}1_{\NC(\vec{\gamma}\vert_{\pi\setminus V}((\vec{s}_-,\vec{s}_+))}\nu_{\vec{s}_-}(\vec{g}\vert_{V_-})X\nu_{\vec{s}_+}(g\vert_{V_+}),
\end{eqnarray*}
where:
\begin{eqnarray*}
X&:=&\sum_{s_{k+1}\dots s_{k+m}=\prod_V\vec{\gamma}\vert_V}\nu_{s_{k+1}}(g_{k+1})\dots \nu_{s_{k+m}}(g_{k+m})=\nu_{\prod_V\vec{\gamma}\vert_V}(g_{k+1}\dots g_{k+m})=\nu_{\prod_V\vec{\gamma}\vert_V}(1)\\
&=&\delta_{\prod_V\vec{\gamma}\vert_V,1},
\end{eqnarray*}
by the relations in $C(\GG)$ and since $\pi\in\NC(\vec{g})$. The formula of the Claim follows.
$\hfill$ $\qed$

\vspace{0.2cm}

\noindent\textit{End of the Proof of Theorem \ref{ThmHaar}.} One has, for all $\vec{\gamma}\in\Lambda^n$ and all $\vec{g}\in\Gamma^n$,
\begin{eqnarray*}
(h\ot\id)\Delta(\nu_{\vec{\gamma}}(\vec{g}))&=&\sum_{\pi\in\NC(\vec{g})}\frac{\mu_{\pi}(\vec{g})}{|\Lambda|^{n-|\pi|}}\sum_{\vec{s}\in I_n}1_{\NC(\vec{\gamma}(\vec{s}))}(\pi)\nu_{\vec{s}}(\vec{g})
\end{eqnarray*}
Hence, it suffices to show that, for all $\pi\in\NC(\vec{g})$,
$$\sum_{\vec{s}\in I_n}1_{\NC(\vec{\gamma}(\vec{s}))}(\pi)\nu_{\vec{s}}(\vec{g})=1_{\NC(\vec{\gamma})}(\pi)1.$$
Let us prove this formula by induction on $\vert\pi\vert$. If $\vert\pi\vert=1$ then $\pi=\{\{1,\dots,n\}\}$ so, since we assumed that $\gamma_1\dots\gamma_n=1$, we have $\pi\in\NC(\vec{\gamma})$. Writing $\vec{\gamma}(\vec{s})=(d_1,\dots d_n)$, we have $d_1\dots d_n=\gamma_1\dots\gamma_n(s_1\dots s_n)^{-1}=1$ for all $\vec{s}\in I_n$. It follows that:
\begin{eqnarray*}
\sum_{\vec{s}\in I_n}1_{\NC(\vec{\gamma}(\vec{s}))}(\pi)\nu_{\vec{s}}(\vec{g})&=&\sum_{\vec{s}\in I_n}\nu_{\vec{s}}(\vec{g})=\sum_{s_1\dots s_n=1}\nu_{s_1}(g_1)\dots \nu_{s_n}(g_n)\\
&=&\nu_1(g_1\dots g_n)=\nu_1(1)=1=1_{\NC(\vec{\gamma})}(\pi)1,
\end{eqnarray*}
where we used the relations in $C(\GG)$ as well as the hypothesis $g_1\dots g_n=1$. Assume now that $\pi\in\NC(\vec{g})$ is such that $\vert\pi\vert\geq 2$. Since $g_i\neq 1$ for all $i$ and $\pi\in\NC(\vec{g})$, $\pi$ has a consecutive block $V\in\pi$ of size $\vert V\vert\geq 2$. By the Claim, we have either $\prod_V\vec{\gamma}\vert_V\neq 1$ and in this case:
$$\sum_{\vec{s}\in I_n}1_{\NC(\vec{\gamma}(\vec{s}))}(\pi)\nu_{\vec{s}}(\vec{g})=0=1_{\NC(\vec{\gamma})}(\pi)1,$$
or, $\prod_V\vec{\gamma}\vert_V= 1$ and in this case:
$$\sum_{\vec{s}\in I_n}1_{\NC(\vec{\gamma}(\vec{s}))}(\pi)\nu_{\vec{s}}(\vec{g})=\sum_{\vec{s}\in I_{n-m}}1_{\NC(\vec{\gamma}'(\vec{s}))}(\pi')\nu_{\vec{s}}(\vec{g}'),$$
where $\pi':=\pi\setminus\{V\}$, $\vec{g}':=\vec{g}\vert_{\pi\setminus V}$ and $\vec{\gamma}':=\vec{\gamma}\vert_{\pi\setminus V}$. Applying our induction hypothesis to the partition $\pi':=\pi\setminus\{V\}$ which satisfies $\vert\pi'\vert=\vert\pi\vert-1$ and $\pi'\in\NC(\vec{g}')$ we have:
$$\sum_{\vec{s}\in I_n}1_{\NC(\vec{\gamma}(\vec{s}))}(\pi)\nu_{\vec{s}}(\vec{g})=1_{\NC(\vec{\gamma}')(\pi')}1=1_{\NC(\vec{\gamma})}(\pi)1,$$
since $\prod_V\vec{\gamma}\vert_V= 1$. It finishes the proof. \end{proof}

\noindent Since $h$ is a dual state (in the crossed product sense), we obtain the following Corollary.

\begin{corollary}\label{CorCrossed}
The reduced C*-algebra and the von Neumann algebra of $\GG$ are:
$$C_r(\GG)=\Ucal_r\rtimes_r\Lambda\text{ and }\Linf(\GG)=\Ucal''\rtimes\Lambda.$$
\end{corollary}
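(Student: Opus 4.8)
The plan is to derive Corollary \ref{CorCrossed} directly from Proposition \ref{PropCrossed} together with the uniqueness of the Haar state established in Theorem \ref{ThmHaar}. By Proposition \ref{PropCrossed} we have a canonical identification $C(\GG)\simeq\Ucal\rtimes_\alpha\Lambda$ under which $\nu_\gamma(g)$ corresponds to $\rho_\gamma(g)\in\Ucal$ and $s\in\Lambda$ corresponds to the canonical unitary $u_s$ in the crossed product. Under this identification, the dual state $\widetilde{h}:=\widetilde{\omega}\circ E$ on $\Ucal\rtimes_\alpha\Lambda$, where $E\,:\,\Ucal\rtimes_\alpha\Lambda\rightarrow\Ucal$ is the canonical conditional expectation, satisfies $\widetilde{h}(\rho_{\vec\gamma}(\vec g)s)=\delta_{s,1}\widetilde{\omega}(\rho_{\vec\gamma}(\vec g))$. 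Combining this with the explicit formula for $\widetilde{\omega}$ from Theorem \ref{ThmUnivState}, we see that $\widetilde{h}$ agrees with the Haar state $h$ on all elements of the form $\nu_{\vec\gamma}(\vec g)s$ with $g_i\neq 1$; by the uniqueness clause of Theorem \ref{ThmHaar} we conclude $h=\widetilde{h}$, i.e. the Haar state on $C(\GG)$ is the dual state of $\widetilde{\omega}$ in the crossed product sense.

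First I would verify the small point that elements of the form $\nu_{\vec\gamma}(\vec g)s$ with the $g_i$ nontrivial, together with scalars, linearly span a dense subspace of $C(\GG)$: this is immediate from the defining relations, since $\nu_\gamma(1)=\delta_{\gamma,1}1$ and the multiplicativity relation $\nu_\gamma(gh)=\sum_{rs=\gamma}\nu_r(g)\nu_s(h)$ let one rewrite any word in the generators $\nu_\gamma(g)$ and $C^*(\Lambda)$ in this normal form (reducing the length of the $\Gamma$-syllables until all are nontrivial, absorbing the $C^*(\Lambda)$-part to the right using $s\nu_{rs}(g)=\nu_{sr}(g)s$). Then the identification $h=\widetilde{\omega}\circ E$ gives at once that the GNS representation of $C(\GG)$ with respect to $h$ coincides with the GNS representation of $\Ucal\rtimes_\alpha\Lambda$ with respect to the dual state $\widetilde{h}$, and the latter is, by the standard theory of reduced crossed products (recalled in Subsection 2.1), exactly $\Ucal_r\rtimes_r\Lambda$ at the C*-level and $\Ucal''\rtimes\Lambda$ at the von Neumann algebra level, where $\Ucal_r$ and $\Ucal''$ are the reduced C*-algebra and von Neumann algebra of $\Ucal$ given by the GNS construction of $\widetilde{\omega}$ — precisely the objects introduced in Subsection 3.1.

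The one point deserving care is the compatibility of GNS constructions: I need that the dual state $\widetilde{h}=\widetilde{\omega}\circ E$ on the \emph{full} crossed product $\Ucal\rtimes_\alpha\Lambda$ has the property that its GNS representation is the reduced crossed product of $(\Ucal_r,\widetilde{\omega})$, and that the corresponding von Neumann algebra is $\Ucal''\rtimes\Lambda$. This is a standard fact about dual states on crossed products: since $\widetilde{\omega}$ is a faithful normal state on $\Ucal''$ and $E$ intertwines the module structure appropriately, the GNS space of $\widetilde{h}$ decomposes as $\ell^2(\Lambda)\otimes H_{\widetilde{\omega}}$ with $\Ucal$ and $\Lambda$ acting in the usual regular-crossed-product fashion. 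The main (and essentially only) obstacle is thus bookkeeping rather than mathematics: making sure that the identification of Proposition \ref{PropCrossed} carries the Haar state to the dual state on the nose, which is handled by the uniqueness statement in Theorem \ref{ThmHaar} as above.

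\begin{proof}
By Proposition \ref{PropCrossed} we identify $C(\GG)=\Ucal\rtimes_\alpha\Lambda$, with $\nu_\gamma(g)$ identified with $\rho_\gamma(g)\in\Ucal$ and $s\in\Lambda$ with the canonical unitary implementing $\alpha_s$. Let $E\,:\,\Ucal\rtimes_\alpha\Lambda\rightarrow\Ucal$ be the canonical conditional expectation and consider the dual state $\widetilde{h}:=\widetilde{\omega}\circ E$, where $\widetilde{\omega}\in\Ucal^*$ is the state of Theorem \ref{ThmUnivState}. For all $n\geq 1$, $\vec\gamma\in\Lambda^n$, $\vec g\in\Gamma^n$ with $g_i\neq 1$ for all $i$, and all $s\in\Lambda$ one has, since $\rho_{\vec\gamma}(\vec g)\in\Ucal$ and $E(\rho_{\vec\gamma}(\vec g)s)=\delta_{s,1}\rho_{\vec\gamma}(\vec g)$,
$$\widetilde{h}(\nu_{\vec\gamma}(\vec g)s)=\delta_{s,1}\,\widetilde{\omega}(\rho_{\vec\gamma}(\vec g))=\delta_{s,1}\sum_{\pi\in\NC(\vec g)\cap\NC(\vec\gamma)}\frac{\mu_\pi(\vec g)}{|\Lambda|^{n-|\pi|}},$$
where we used Theorem \ref{ThmUnivState}. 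By the uniqueness clause of Theorem \ref{ThmHaar}, it follows that $\widetilde{h}$ equals the Haar state $h$ of $\GG$, i.e. $h=\widetilde{\omega}\circ E$ is the dual state on the crossed product $\Ucal\rtimes_\alpha\Lambda$.

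Consequently, the GNS representation of $C(\GG)$ associated with $h$ is canonically isomorphic to the GNS representation of $\Ucal\rtimes_\alpha\Lambda$ associated with the dual state $\widetilde{\omega}\circ E$. By the standard description of reduced crossed products (Subsection 2.1), the image of $C(\GG)=\Ucal\rtimes_\alpha\Lambda$ in this representation is the reduced crossed product $\Ucal_r\rtimes_r\Lambda$, where $\Ucal_r=\widetilde{\pi}(\Ucal)$ is the reduced C*-algebra given by the GNS construction of $\widetilde{\omega}$, and the von Neumann algebra it generates is $\Ucal''\rtimes\Lambda$, where $\Ucal''$ is the von Neumann algebra generated by $\Ucal_r$ in the GNS representation of $\widetilde{\omega}$. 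Hence $C_r(\GG)=\Ucal_r\rtimes_r\Lambda$ and $\Linf(\GG)=\Ucal''\rtimes\Lambda$.
\end{proof}
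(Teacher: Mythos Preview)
Your proof is correct and follows essentially the same route as the paper: both establish that the Haar state $h$ coincides with the dual state $\widetilde{\omega}\circ E$ on the crossed product $\Ucal\rtimes_\alpha\Lambda$, and then invoke the standard fact that the GNS construction of a dual state yields the reduced/von Neumann crossed product. The only cosmetic difference is that the paper treats ``$h$ is the dual state'' as already established \emph{inside} the proof of Theorem~\ref{ThmHaar} (where the dual state is shown to be $\Delta$-invariant and hence equal to $h$), so its proof of the corollary is a single sentence, whereas you recover the same equality by matching the formula of Theorem~\ref{ThmUnivState} against the uniqueness clause of Theorem~\ref{ThmHaar}.
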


%%%%%%%%%%%%%%%%%%%%%%%%
\section{Operator algebras of the free wreath product}
%%%%%%%%%%%%%%%%%%%%%%%%

\noindent In this section we study the properties of the reduced C*-algebra $C_r(\GG)$ and the von Neumann algebra $\Linf(\GG)$.

%%%%%%%%%%%%%%%%%%%%%%%%
\subsection{The von Neumann algebra}

\begin{theorem}\label{ThmVN}
For \(\chi\in\widehat{\mathcal Z(\Lambda)}\), set $p_\chi
=
|\mathcal Z(\Lambda)|^{-1}
\sum_{z\in\mathcal Z(\Lambda)}
\overline{\chi(z)}u_z
\in \Linf(\GG).$
Then $\Linf(\GG)
=\bigoplus_{\chi\in\widehat{\mathcal Z(\Lambda)}}
\Linf(\GG)p_\chi.$
Moreover, for every section $s:\Lambda/\mathcal Z(\Lambda)\to\Lambda$
with \(s(e)=e\), one has $\Linf(\GG)p_\chi
\cong\Ucal''\rtimes_{\bar\alpha,c_\chi}\Lambda/\mathcal Z(\Lambda),$
where \(\bar\alpha:\Lambda/\mathcal Z(\Lambda)\curvearrowright \Ucal''\) is the action induced by \(\alpha\), i.e. $\bar\alpha_{\pi(r)}=\alpha_r, r\in\Lambda,$
where \(\pi:\Lambda\to\Lambda/\mathcal Z(\Lambda)\) is the quotient map.
and $c_\chi(q,r)
=\chi\bigl(s(q)s(r)s(qr)^{-1}\bigr),q,r\in\Lambda/\mathcal Z(\Lambda).$

If \(\Gamma\) is icc and \(\Lambda\) is non-trivial, then the above direct sum is
the central decomposition of \(\Linf(\GG)\), and each summand $\Linf(\GG)p_\chi$
is a full \(\mathrm{II}_1\)-factor.
\end{theorem}

\begin{proof}
We use the identification $\Linf(\GG)\cong \Ucal''\rtimes_\alpha\Lambda.$
Since \(\mathcal Z(\Lambda)\subset\ker(\Lambda\curvearrowright\Ucal'')\),
Lemma~\ref{LemCentralSummandTwistedCrossedProduct} gives $\Linf(\GG)
=\bigoplus_{\chi\in\widehat{\mathcal Z(\Lambda)}}
\Linf(\GG)p_\chi$
and, for every section \(s:\Lambda/\mathcal Z(\Lambda)\to\Lambda\) with \(s(e)=e\),
an isomorphism $\Linf(\GG)p_\chi\cong
\Ucal''\rtimes_{\bar\alpha,c_\chi}\Lambda/\mathcal Z(\Lambda),$
with \(\bar\alpha\) and \(c_\chi\) as in the statement.
Assume now that \(\Gamma\) is icc and that \(\Lambda\) is non-trivial. We show that
the induced action $\Lambda/\mathcal Z(\Lambda)\curvearrowright\Ucal''$
is outer. Let \(r\in\Lambda\), and assume that there exists a unitary
\(w\in\Ucal''\)such that $\alpha_r=\Ad(w).$
Then, for every \(g\in\Gamma\), $(\id\otimes\alpha_r)\rho''(\lambda_g)
=
\sum_{\gamma\in\Lambda}\gamma\otimes w\nu_\gamma(\lambda_g)w^*
=
\sum_{\gamma\in\Lambda}r\gamma r^{-1}\otimes\nu_\gamma(\lambda_g).$
Thus $r^{-1}\otimes w
\in
\rho''(L(\Gamma))'
\cap
\bigl(C^*(\Lambda)\otimes\Ucal''\bigr).$
Since \(\Gamma\) is icc, Lemma~\ref{LemFactor} gives $r^{-1}\otimes w\in \mathcal Z(C^*(\Lambda))\otimes 1.$
Hence $w\in\mathbb C1
\text{and}r\in\mathcal Z(\Lambda).$
Therefore $\alpha_r \text{ is outer for every }r\in\Lambda\setminus\mathcal Z(\Lambda).$

By Corollary~\ref{CorTwistedSummandsIIoneFactors}, the above direct sum is the
central decomposition of \(\Linf(\GG)\), and each summand $\Linf(\GG)p_\chi$
is a \(\mathrm{II}_1\)-factor.

Finally, by \cite[Theorem~C]{FMP24}, \(\Ucal''\) is a full
\(\mathrm{II}_1\)-factor. Since the induced action $\Lambda/\mathcal Z(\Lambda)\curvearrowright\Ucal''$
is outer, Corollary~\ref{CorTwistedCrossedProductFull} applied to $\Ucal''\rtimes_{\bar\alpha,c_\chi}\Lambda/\mathcal Z(\Lambda)$
shows that each $\Linf(\GG)p_\chi$
is full.
\end{proof}

\begin{corollary}
If $\Gamma$ is icc and $\Lambda$ has trivial center then $\Linf(\GG)$ is a full ${\rm II}_1$-factor.
\end{corollary}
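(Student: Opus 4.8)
The plan is to deduce this Corollary directly from Theorem~\ref{ThmVN} by specializing to the case $\mathcal{Z}(\Lambda)=\{1\}$. Under this hypothesis, the tensor factor $\rL(\mathcal{Z}(\Lambda))$ is just $\C$, so the canonical isomorphism of Theorem~\ref{ThmVN} collapses to $\Linf(\GG)\simeq \Ucal''\rtimes\Lambda/\mathcal{Z}(\Lambda)=\Ucal''\rtimes\Lambda$. It then remains only to observe that $\Ucal''\rtimes\Lambda/\mathcal{Z}(\Lambda)$ is a full ${\rm II}_1$-factor, which is exactly the second conclusion of Theorem~\ref{ThmVN}: we need $\Gamma$ icc, which is assumed, and $\Lambda$ non-trivial. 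The only edge case is $\Lambda=\{1\}$, which has trivial center; in that degenerate situation $\GG$ has a classical description and one can either exclude it by convention or note separately that $\Ucal''=\rL(\Gamma)$ is a ${\rm II}_1$-factor when $\Gamma$ is icc (and is full, e.g.\ by \cite[Theorem C]{FMP24} or directly).

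Concretely, I would write: since $\mathcal{Z}(\Lambda)$ is trivial, $\rL(\mathcal{Z}(\Lambda))=\C$ and Theorem~\ref{ThmVN} gives $\Linf(\GG)\simeq \Ucal''\rtimes\Lambda$, and moreover $\Ucal''\rtimes\Lambda$ is a full ${\rm II}_1$-factor because $\Gamma$ is icc and $\Lambda$ is non-trivial (the case $\Lambda=\{1\}$ being trivial or handled separately). This is a one-line deduction and there is no real obstacle; the substance is all contained in Theorem~\ref{ThmVN}, whose proof combines Corollary~\ref{CorCrossed} and Lemma~\ref{LemCrossed} to split off the center, Lemma~\ref{LemFactor} to show the residual action $\Lambda/\mathcal{Z}(\Lambda)\curvearrowright\Ucal''$ is outer, and then the outer-crossed-product factoriality results of \cite{Jo82} and \cite[Theorem C]{FMP24}.

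\begin{proof}
If $\mathcal{Z}(\Lambda)$ is trivial then $\rL(\mathcal{Z}(\Lambda))=\C$, so the canonical isomorphism of Theorem~\ref{ThmVN} reads $\Linf(\GG)\simeq\Ucal''\rtimes\Lambda/\mathcal{Z}(\Lambda)=\Ucal''\rtimes\Lambda$. If moreover $\Lambda$ is non-trivial, Theorem~\ref{ThmVN} asserts that $\Ucal''\rtimes\Lambda/\mathcal{Z}(\Lambda)$ is a full ${\rm II}_1$-factor, hence so is $\Linf(\GG)$. Finally, if $\Lambda$ is trivial then $\Ucal''=\rL(\Gamma)$ and $\Linf(\GG)=\rL(\Gamma)$, which is a full ${\rm II}_1$-factor when $\Gamma$ is icc by \cite[Theorem C]{FMP24}.
\end{proof}
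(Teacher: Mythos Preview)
Your deduction from Theorem~\ref{ThmVN} is exactly what the paper intends: with $\mathcal{Z}(\Lambda)=\{1\}$ the tensor factor ${\rm L}(\mathcal{Z}(\Lambda))$ collapses to $\C$, and the second clause of Theorem~\ref{ThmVN} gives fullness and factoriality of $\Ucal''\rtimes\Lambda$. The paper gives no separate proof of the corollary; it is an immediate specialization.

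One genuine error in your handling of the degenerate case $\Lambda=\{1\}$: it is \emph{not} true that ${\rm L}(\Gamma)$ is a full ${\rm II}_1$-factor for every icc group $\Gamma$. Factoriality is classical (Murray--von Neumann), but fullness of ${\rm L}(\Gamma)$ is equivalent to $\Gamma$ being non-inner-amenable, and there are icc inner-amenable groups. Moreover \cite[Theorem C]{FMP24}, as used in the proof of Theorem~\ref{ThmVN}, is a statement about $\Ucal''$ for a non-trivial target algebra $B$, so it does not cover $B=\C$. The paper tacitly assumes $\Lambda$ non-trivial throughout (see the sentence following Theorem~B in the introduction), so this edge case is simply outside the scope of the statement; you should either note that $\Lambda$ non-trivial is a standing hypothesis, or drop the last sentence of your proof rather than assert something false.
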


%%%%%%%%%%%%%%%%%%%%%%%%
\subsection{The reduced C*-algebra}
%%%%%%%%%%%%%%%%%%%%%%%%

\iffalse \begin{theorem}\label{ThmCstar}
We have a canonical isomorphism $C_r(\GG)\simeq (\Ucal_r\rtimes_r \Lambda/\mathcal{Z}(\Lambda))\ot C^*(\mathcal{Z}(\Lambda))$ and, if $\Gamma$ is icc and $\Lambda$ is non-trivial then $\Ucal_r\rtimes \Lambda/\mathcal{Z}(\Lambda)$ is simple with unique trace.
\end{theorem}

\begin{proof}
We first observe, as in the proof of Theorem \ref{ThmVN} (see Remark \ref{RmkCrossed}), that:
$$C_r(\GG)\simeq (\Ucal_r\rtimes_r \Lambda/\mathcal{Z}(\Lambda))\ot C^*(\mathcal{Z}(\Lambda)).$$
It was shown in the proof of Theorem \ref{ThmVN} that $\Lambda/\mathcal{Z}(\Lambda)\curvearrowright \Ucal''$ is outer so by \cite[Theorem 1]{MR1302613}, we conclude that $\Ucal_r\rtimes_r \Lambda/\mathcal{Z}(\Lambda)$ is simple with unique trace since, by \cite[Theorem A]{FMP24}, $\Ucal_r$ is simple with unique trace as soon as $\Lambda$ is non-trivial.
\end{proof}\fi

\begin{theorem}\label{ThmCstar}
For \(\chi\in\widehat{\mathcal Z(\Lambda)}\), set $p_\chi=|\mathcal Z(\Lambda)|^{-1}
\sum_{z\in\mathcal Z(\Lambda)}
\overline{\chi(z)}u_z
\in C_r(\GG).$
Then $C_r(\GG)
=\bigoplus_{\chi\in\widehat{\mathcal Z(\Lambda)}}C_r(\GG)p_\chi .$
Moreover, for every section
$s:\Lambda/\mathcal Z(\Lambda)\to\Lambda$
with \(s(e)=e\), one has $C_r(\GG)p_\chi
\cong
\Ucal_r\rtimes_{r,\bar\alpha,c_\chi}\Lambda/\mathcal Z(\Lambda),$
where \(\bar\alpha:\Lambda/\mathcal Z(\Lambda)\curvearrowright \Ucal_r\) is the
action induced by \(\alpha\), that is, $\bar\alpha_{\pi(r)}=\alpha_r,\qquad r\in\Lambda,$
with \(\pi:\Lambda\to\Lambda/\mathcal Z(\Lambda)\) the quotient map, and $c_\chi(q,r)
=\chi\bigl(s(q)s(r)s(qr)^{-1}\bigr), q,r\in\Lambda/\mathcal Z(\Lambda).$

If \(\Gamma\) is icc and \(\Lambda\) is non-trivial, then each summand $C_r(\GG)p_\chi$
is simple with unique trace.
\end{theorem}

\begin{proof}
We use the identification $C_r(\GG)\cong \Ucal_r\rtimes_{r,\alpha}\Lambda.$
Since \(\mathcal Z(\Lambda)\subset\ker(\Lambda\curvearrowright\Ucal_r)\), the
projections $p_\chi
=|\mathcal Z(\Lambda)|^{-1}
\sum_{z\in\mathcal Z(\Lambda)}
\overline{\chi(z)}u_z, \chi\in\widehat{\mathcal Z(\Lambda)},$
are central in \(C_r(\GG)\), mutually orthogonal, and sum to \(1\). Hence $C_r(\GG)=\bigoplus_{\chi\in\widehat{\mathcal Z(\Lambda)}}
C_r(\GG)p_\chi .$

By the \(C^*\)-algebraic version of Lemma~\ref{LemCentralSummandTwistedCrossedProduct}
(see Remark~\ref{RmkCrossed}), for every section
\(s:\Lambda/\mathcal Z(\Lambda)\to\Lambda\) with \(s(e)=e\), one has $C_r(\GG)p_\chi
\cong
\Ucal_r\rtimes_{r,\bar\alpha,c_\chi}\Lambda/\mathcal Z(\Lambda),$
with \(\bar\alpha\) and \(c_\chi\) as in the statement.

Assume now that \(\Gamma\) is icc and that \(\Lambda\) is non-trivial. By the
proof of Theorem~\ref{ThmVN}, the induced action $\Lambda/\mathcal Z(\Lambda)\curvearrowright \Ucal''$
is outer. Moreover, by \cite[Theorem~A]{FMP24}, \(\Ucal_r\) is simple with unique
trace. Therefore, by \cite[Theorem~1]{MR1302613} applied to the reduced twisted
crossed product, each algebra $\Ucal_r\rtimes_{r,\bar\alpha,c_\chi}\Lambda/\mathcal Z(\Lambda)$
is simple with unique trace. Hence each summand $C_r(\GG)p_\chi$
is simple with unique trace.
\end{proof}

\begin{corollary}
If $\Gamma$ is icc and $\Lambda$ has trivial center then $C_r(\GG)$ is simple with unique trace.
\end{corollary}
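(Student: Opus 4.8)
This is an immediate specialization of Theorem \ref{ThmCstar}. The plan is to feed the hypothesis ``$\Lambda$ has trivial center'' into the canonical isomorphism of that theorem and observe that the extra tensor factor disappears. Concretely: since $\mathcal{Z}(\Lambda)=\{1\}$, we have $\Lambda/\mathcal{Z}(\Lambda)=\Lambda$ and $C^*(\mathcal{Z}(\Lambda))=\C$, so the isomorphism $C_r(\GG)\simeq(\Ucal_r\rtimes_r\Lambda/\mathcal{Z}(\Lambda))\ot C^*(\mathcal{Z}(\Lambda))$ furnished by Theorem \ref{ThmCstar} becomes simply $C_r(\GG)\simeq\Ucal_r\rtimes_r\Lambda$.

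Now, since $\Gamma$ is icc and $\Lambda$ is non-trivial (a group with trivial center being understood as non-trivial here), the second assertion of Theorem \ref{ThmCstar} applies directly and gives that $\Ucal_r\rtimes_r\Lambda/\mathcal{Z}(\Lambda)=\Ucal_r\rtimes_r\Lambda$ is simple with unique trace. Transporting this through the isomorphism above yields that $C_r(\GG)$ is simple with unique trace. There is essentially no obstacle: the only point worth spelling out is that tensoring by the trivial one-dimensional C*-algebra $C^*(\mathcal{Z}(\Lambda))=\C$ leaves simplicity and uniqueness of trace untouched, which is clear. (The parallel statement for the von Neumann algebra, in the corollary following Theorem \ref{ThmVN}, is obtained in exactly the same way from Theorem \ref{ThmVN}, replacing $C^*(\mathcal{Z}(\Lambda))$ by ${\rm L}(\mathcal{Z}(\Lambda))=\C$.)
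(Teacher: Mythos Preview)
Your proposal is correct and matches the paper's approach exactly: the corollary is stated in the paper without proof precisely because it is the immediate specialization of Theorem \ref{ThmCstar} that you describe. The only caveat is your parenthetical remark that a group with trivial center is ``understood as non-trivial here''; strictly speaking the trivial group also has trivial center, so this is indeed a tacit standing assumption (consistent with the paper's conventions elsewhere), and you are right to flag it.
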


 \begin{remark}
The center decomposition also gives a way to distinguish the resulting compact
quantum groups. Indeed, fix an icc group \(\Gamma\). If \(\Lambda_1\) and
\(\Lambda_2\) are finite non-trivial groups such that
\[
|\mathcal Z(\Lambda_1)|\neq |\mathcal Z(\Lambda_2)|,
\]
then the centers of the corresponding von Neumann algebras have different
dimensions:
\[
\dim \mathcal Z\bigl(\Linf(\GG_{\Gamma,\Lambda_i})\bigr)
=
|\mathcal Z(\Lambda_i)|,
\qquad i=1,2.
\]
Hence these von Neumann algebras are not isomorphic. In particular, the
associated compact quantum groups \(\GG_{\Gamma,\Lambda_1}\) and
\(\GG_{\Gamma,\Lambda_2}\) are not isomorphic.
\end{remark}

%%%%%%%%%%%%%%%%%%%%%%%%%%%%%%%%%%%%%%%%%%%%%%%%%%%%%%%%%%%%%%%%%%%%%%%%%%%%%%%%%%%%%%%%%%%%%%%%%%%%%%%%%%%%%%%%%%%%%%%%%%%%%%%%%%%%%%%%%%%%%%%%%%%%%%%%%
\bibliography{ref.bib}
\bibliographystyle{amsalpha}

\end{document}